\newcounter{my_enumerate_counter}
\newcommand{\pushcounter}{\setcounter{my_enumerate_counter}{\value{enumi}}}
\newcommand{\popcounter}{\setcounter{enumi}{\value{my_enumerate_counter}}}
\newcommand{\fT}{\mathfrak T} 
\newcommand{\fF}{\mathfrak F} 
\newcommand{\ffF}{\mathfrak {\bar F}}
\newcommand{\cU}{\mathcal U}
\newcommand{\cO}{\mathcal O}
\newcommand{\bfn}{\mathbf n}
\newcommand{\bt}{\mathbf t} 
\newcommand{\bs}{\mathbf s}
\newcommand{\bbD}{{\mathbb D}}
\newcommand{\bbN}{{\mathbb N}}
\newcommand{\bbC}{\mathbb C}
\newcommand{\bbQ}{\mathbb Q}
\newcommand{\cZ}{{\mathcal Z}}
\newcommand{\e}{\varepsilon}
\newtheorem{thm}{Theorem}[section]
\newtheorem{theorem}{Theorem}
\newtheorem{lemma}[thm]{Lemma}
\newtheorem{prop}[thm]{Proposition}
\DeclareMathOperator{\dist}{dist}
\theoremstyle{definition}
\title{Omitting types and AF algebras}
\author[K. Carlson et al.]{KEVIN CARLSON}
\address{Department of Mathematics\\
Indiana University\\ Bloomington, Indiana, 47408}
\email{kdcarlso@indiana.edu}
\author[]{ENOCH CHEUNG}
\address{Department of Mathematics\\ 970 Evans Hall\\ University of California
Berkeley\\ CA 94720}
\email{enochcheung@berkeley.edu}
\author[]{Ilijas Farah}
\address{Department of Mathematics and Statistics\\
York University\\
4700 Keele Street\\
North York, Ontario\\ Canada, M3J
1P3\\
and Matematicki Institut, Kneza Mihaila 35, Belgrade, Serbia}
\urladdr{http://www.math.yorku.ca/$\sim$ifarah}
\email{ifarah@mathstat.yorku.ca}
\author[]{ALEXANDER GERHARDT-BOURKE}
\address{School of Mathematics and Applied Statistics\\
 University
of Wollongong\\
  NSW 2522, Australia}
\email{agb526@uowmail.edu.au}
\author[]{Bradd Hart} 
\address{Dept. of Mathematics and Statistics\\
McMaster University\\ 1280 Main Street\\ West Hamilton, Ontario\\
Canada L8S 4K1}
\email{hartb@mcmaster.ca}
\urladdr{http://www.math.mcmaster.ca/$\sim$bradd/}
\author[]{LEANNE MEZUMAN}
\address{
Department of Mathematics and Statistics\\
University of Guelph\\
 50 Stone Road East\\
Guelph, Ontario\\ Canada N1G 2W1}
 \email{lmezuman@uoguelph.ca}
\author[]{Nigel Sequeira}
\address{Dept. of Mathematics and Statistics\\
McMaster University\\ 1280 Main Street\\ West Hamilton, Ontario\\
Canada L8S 4K1}
\email{sequeinj@mcmaster.ca}
\author[]{ALEXANDER SHERMAN}
\address{Department of Mathematics, 
University  of Maryland, College Park, MD 20742-4015}
\email{asherm92@umd.edu}
\thanks{This research was supported by the Fields--MITACS undergraduate summer research program in July and August 2012.}
\date{\today}
\begin{document}

\maketitle

The model theory of metric structures (\cite{BYBHU}) 
was successfully applied to analyze ultrapowers of C*-algebras
in \cite{FaHaSh:Model1} and \cite{FaHaSh:Model2}. 
Since  important classes of separable C*-algebras, such as UHF, AF, or nuclear algebras,  
are not elementary (i.e., not characterized by their theory---see \cite[\S 6.1]{FaHaSh:Model2}), for a moment it seemed that 
 model theoretic methods do not apply  to these classes of C*-algebras. 
 We prove results suggesting that this is not the case.

 Many of the  prominent problems in the modern theory of C*-algebras
 are concerned with the extent of the class of nuclear C*-algebras. 
 We have the bootstrap class problem (see
 \cite[IV3.1.16]{Black:Operator}), the question of whether 
 all nuclear C*-algebras satisfy the Universal 
 Coefficient Theorem, UCT,  (see \cite[\S 2.4]{Ror:Classification}), and the Toms---Winter conjecture (to the effect that the three regularity properties of nuclear C*-algebras discussed in \cite{EllTo:Regularity}
 are equivalent;  see~\cite{Win:Ten}). 
  If one could characterize classes of algebras in question---such as nuclear algebras, 
%$\cZ$-stable algebras, 
  algebras with finite nuclear dimension, or algebras with finite decomposition rank---as algebras that 
  omit certain sets of types (see \S\ref{S.type}) then one might use the omitting types theorem (\cite[\S 12]{BYBHU}) to construct such algebras, modulo resolving a number of
  nontrivial technical obstacles.  
This paper is the first, albeit modest, step in this project.

Recall that a unital  C*-algebra is UHF (Uniformly HyperFinite) if 
it is a tensor product of full matrix algebras, $M_n(\bbC)$. 
Non-unital UHF algebras are direct limits of full matrix algebras, 
and in the separable, unital case the two definitions are equivalent. 
A C*-algebra  is AF (Approximately Finite) if it is a direct limit of finite-dimensional C*-algebras. 
These three classes of C*-algebras were the first to be classified -
by work of Glimm, Dixmier and Elliott (building on Bratteli's results), respectively. 
Elliott's classification of separable AF algebras  by the ordered $K_0$ group 
was a prototype for  Elliott's  program for classification of nuclear, simple, separable, unital 
C*-algebras by their K-theoretic invariants (see \cite{Ror:Classification} or \cite{EllTo:Regularity}). 
Types are defined in \S\ref{S.type}. 

%We prove some  preliminary results towards understanding model-theoretic aspects of 
%Elliott's program. 

\begin{theorem} \label{T.UHF} There are countably many types 
 such that 
a separable C*-algebra $A$ is UHF if and only if it omits all of these types. 
\end{theorem}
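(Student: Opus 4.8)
The plan is to reduce Theorem~\ref{T.UHF} to the classical \emph{local characterization} of the class of UHF algebras---i.e.\ of direct limits of full matrix algebras---and then to rewrite that characterization as an omitting-types statement. The first ingredient is the (standard) fact that a \emph{separable} C*-algebra $A$ is UHF if and only if it is \emph{locally matricial}: for every finite $F\subseteq A$ and every $\e>0$ there is a $*$-subalgebra $B\subseteq A$ with $B\cong M_k(\bbC)$ for some $k\ge 1$ such that every element of $F$ lies within $\e$ of $B$. One implication is immediate. For the converse, run the usual intertwining argument: given a dense sequence $(a_m)$ in $A$, build an increasing chain $B_1\subseteq B_2\subseteq\cdots$ of full matrix $*$-subalgebras with $a_m$ within $1/m$ of $B_m$, at each stage invoking a perturbation lemma (Glimm, Bratteli) to the effect that a full matrix $*$-subalgebra sufficiently close to another one is unitarily conjugate to a $*$-subalgebra of the latter; then $A=\varinjlim(B_n,\text{inclusion})$ is a direct limit of full matrix algebras. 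No divisibility of the dimensions $\dim B_n$ is needed, so this also handles the non-unital case, where the connecting maps need not be unital.

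The second step is to encode ``locally matricial'' by countably many types. Fix $k\ge 1$ and let $Z_k$ be the set of $k^2$-tuples $\bar e=(e_{rs})_{r,s\le k}$ of unit-ball elements that form an (exact) system of matrix units; by the weak stability of the matrix-unit relations---a tuple satisfying them approximately lies close to one satisfying them exactly, with a modulus depending only on the defect---$Z_k$ is a \emph{definable set}, so restricted quantification over it is permitted. Writing $\bar x=(x_1,\dots,x_n)$, put
\[
\Psi_{n,k}(\bar x)\ :=\ \inf_{\bar e\in Z_k}\ \max_{i\le n}\ \dist\!\bigl(x_i,\ C^{*}(\bar e)\bigr),
\]
which is then a definable predicate: $\dist(x_i,C^{*}(\bar e))$ is itself a definable predicate of $(\bar x,\bar e)$, since $C^{*}(\bar e)=\operatorname{span}\{e_{rs}\}$ and, for $\bar e\in Z_k$, the minimizing scalar coefficients are norm-bounded. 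For each $n\ge 1$ and each positive rational $\e$ let $p_{n,\e}(\bar x)$ be the type consisting of the closed conditions $\Psi_{n,k}(\bar x)\ge\e$, one for each $k\ge 1$. Since $C^{*}(\bar e)$ for $\bar e\in Z_k$ is precisely a copy of $M_k(\bbC)$ in $A$ (or $\{0\}$) and every copy of $M_k(\bbC)$ arises this way, $A$ realizes $p_{n,\e}$ exactly when $A$ has an $n$-tuple $\bar a$ in its unit ball with $\max_{i\le n}\dist(a_i,B)\ge\e$ for every full matrix $*$-subalgebra $B\subseteq A$ (and with some coordinate of $\bar a$ of norm $\ge\e$, to handle $B=\{0\}$). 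Hence $A$ omits all of the countably many types $p_{n,\e}$ ($n\in\bbN$, $\e$ a positive rational) if and only if $A$ is locally matricial, i.e.\ if and only if $A$ is UHF.

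The point that needs genuine care, and which I expect to be the main obstacle, is the definability claim underlying the second step: that $Z_k$ is a definable set and that $\Psi_{n,k}$ is consequently a definable predicate, so that the conditions $\Psi_{n,k}(\bar x)\ge\e$ are legitimate. This rests on the weak stability of the matrix-unit relations---classical and Glimm-style: perturb the first row of an approximate system to an exact partial-isometry column via functional calculus and polar decomposition, then define the remaining $e_{rs}$ by products---together with the facts that restricted quantification over a definable set is allowed and that the distance to the finite-dimensional span of a genuine system of matrix units is a definable predicate (the relevant coefficients being uniformly norm-bounded). One also has to choose, for a given failure of local matriciality at some scale $\e_0$, a small enough positive rational $\e$ so that the estimates on the two sides of ``$A$ realizes $p_{n,\e}$'' match. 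Once all of this is in place the two equivalences above are essentially an unwinding of definitions: ``$A$ omits every $p_{n,\e}$'' says that each $n$-tuple in the unit ball of $A$ is, for some $k$, within $\e$ (coordinate-wise) of some copy of $M_k(\bbC)$, which over all $(n,\e)$ is exactly local matriciality, hence---for separable $A$, by the intertwining argument of the first step---being a direct limit of full matrix algebras. Everything else is classical, going back to Glimm and Bratteli; the content of the theorem is precisely this repackaging, and the analogous treatment of AF algebras would replace full matrix $*$-subalgebras by finite-dimensional ones throughout.
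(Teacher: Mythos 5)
Your proposal is correct and follows essentially the same route as the paper: reduce to Glimm's local characterization (UHF $=$ locally matricial for separable algebras), use weak stability of the matrix-unit relations to make the set of copies of $M_k(\bbC)$ definable, form the definable predicates $\inf_C\max_i\dist(x_i,C)$ (the paper's $\beta_{k,n}$ from Lemma~\ref{L.dist}), and package the conditions $\beta_{k,n}(\bar x)\ge 1/m$ into the countably many types $\bt_{m,n}$ whose omission is exactly local matriciality. The points you flag as needing care (definability of the matrix-unit set, boundedness of the minimizing scalars) are precisely what the paper handles via Lemmas~\ref{L.alpha} and~\ref{L.dist} and quantification over the compact disk in the expanded two-sorted language.
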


\begin{theorem} \label{T.AF} There are countably many types 
 such that 
a separable C*-algebra $A$ is AF if and only if it omits all of these types. 
\end{theorem}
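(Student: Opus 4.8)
The plan is to reduce Theorem~\ref{T.AF} to two ingredients: the standard local characterisation of AF algebras, and a definability statement for approximate containment in a finite-dimensional subalgebra. Recall Bratteli's theorem (see \cite{Black:Operator}): a separable C*-algebra $A$ is AF if and only if for every finite $\cF\subseteq A$ and every $\e>0$ there is a finite-dimensional sub-C*-algebra $B\subseteq A$ with $\dist(a,B)<\e$ for all $a\in\cF$. Fix an enumeration $F_{\bfn}$, with $\bfn$ ranging over all finite sequences of positive integers, of the finite-dimensional C*-algebras up to isomorphism, where $F_{\bfn}=\bigoplus_{i\le|\bfn|}M_{n_i}(\bbC)$, together with a presentation of each $F_{\bfn}$ by matrix units $\{e^{(i)}_{jk}\}$. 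To each pair $(m,\e)$ with $m\ge1$ and $\e\in\bbQ^{+}$ I associate a type $\bt_{m,\e}$ in variables $x_1,\dots,x_m$ (taken to range over the unit ball), and I prove that a separable $A$ omits every $\bt_{m,\e}$ precisely when the displayed local condition holds, hence precisely when $A$ is AF; since there are countably many such pairs, this gives the theorem.

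The heart of the matter is a definability lemma --- the direct-sum analogue of the matrix-algebra computation behind Theorem~\ref{T.UHF}. For each $\bfn$ and each $m$ there is a formula $\psi_{\bfn,m}(x_1,\dots,x_m)$ such that, for every C*-algebra $A$ and $\bar a\in A^{m}$, one has $\psi_{\bfn,m}(\bar a)\le\inf\{\max_{l\le m}\dist(a_l,B):B\subseteq A,\ B\cong F_{\bfn}\}$, and, moreover, $\psi_{\bfn,m}(\bar a)<\e$ entails the existence of $B\subseteq A$ with $B\cong F_{\bfn}$ and $\max_{l}\dist(a_l,B)<K\e$ for an absolute constant $K$. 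The formula has the shape $\psi_{\bfn,m}(\bar x)=\inf_{\bar y}\inf_{\bar z}\max\bigl(C_{\bfn}R_{\bfn}(\bar y),\ \max_{l}\bigl(\|x_l-z_l\|+W_{\bfn}S_{\bfn}(\bar y,z_l)\bigr)\bigr)$, where $\bar y,\bar z$ are tuples of the appropriate lengths ranging over suitable balls of the structure, $R_{\bfn}(\bar y)$ measures the failure of $\bar y$ to satisfy the matrix-unit relations of $F_{\bfn}$, $S_{\bfn}(\bar y,z)$ measures the failure of $z$ to lie in the linear span of $\bar y$, and the constants $C_{\bfn},W_{\bfn}$ (depending on $\dim F_{\bfn}$) are chosen to absorb the perturbation and reconstruction estimates, so that the displayed bounds hold with a dimension-free constant $K$. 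The one genuinely nontrivial point is that ``$z$ lies in the linear span of the matrix units $\bar y$'' is first-order expressible: writing $q_i=\sum_j e^{(i)}_{jj}$ for the block units, this amounts to $z=\sum_i q_i z q_i$ together with the assertion that every corner $e^{(i)}_{1j}\,z\,e^{(i)}_{k1}$ is a scalar multiple of $e^{(i)}_{11}$; and for a nonzero projection $q$ and $w=qwq$ one has $w\in\bbC q$ if and only if $\tfrac{1}{2}(w+w^{*})$ and $\tfrac{1}{2i}(w-w^{*})$ each have one-point spectrum in $qAq$, which for a self-adjoint $h$ with $\|h\|\le M$ is exactly the norm identity $\|h+Mq\|+\|Mq-h\|=2M$ (both summands being positive, so the left side equals $2M+\operatorname{diam}\sigma_{qAq}(h)$). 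Passing between approximate and exact matrix units uses that finite-dimensional C*-algebras are semiprojective --- more precisely, that an $\bar y$ with $R_{\bfn}(\bar y)$ small lies within $O(R_{\bfn}(\bar y))$ of an exact system of matrix units for a copy of $F_{\bfn}$.

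Granting the lemma, put $\bt_{m,\e}=\{\,\psi_{\bfn,m}(x_1,\dots,x_m)\ge\e:\bfn\,\}$. A tuple $\bar a$ from the unit ball of $A$ realises $\bt_{m,\e}$ exactly when $\psi_{\bfn,m}(\bar a)\ge\e$ for every $\bfn$, so by the first bound of the lemma any $\bar a$ that is $\e$-contained in some copy of some $F_{\bfn}$ in $A$ fails to realise $\bt_{m,\e}$, while by the second any $\bar a$ realising $\bt_{m,\e}$ is not $(\e/K)$-contained in any finite-dimensional subalgebra of $A$. Hence, up to the harmless constant $K$, $A$ omits $\bt_{m,\e}$ if and only if every $m$-tuple from its unit ball is $\e$-contained in some finite-dimensional subalgebra; and therefore $A$ omits every $\bt_{m,\e}$ ($m\ge1$, $\e\in\bbQ^{+}$) if and only if --- rescaling arbitrary finite tuples into the unit ball --- for every finite $\cF\subseteq A$ and every $\e>0$ there is a finite-dimensional $B\subseteq A$ with $\dist(a,B)<\e$ for all $a\in\cF$. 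By Bratteli's theorem and the separability of $A$, this says precisely that $A$ is AF. The main obstacle is the definability lemma, and within it the single observation that membership of an element in the linear span of a prescribed system of matrix units --- equivalently, that a given element of a corner $qAq$ is a scalar multiple of $q$ --- is captured by a formula, for which the spectral identity above is the key; the rest is bookkeeping of the perturbation and reconstruction estimates and of the choice of constants. The argument parallels that of Theorem~\ref{T.UHF} but is somewhat simpler, as for AF algebras we need neither unitality of $A$ nor any compatibility among the sizes of the approximating subalgebras --- only that each of them is finite-dimensional.
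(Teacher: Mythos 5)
Your argument is correct, and its skeleton is the one the paper uses: reduce to Bratteli's local characterization of separable AF algebras, build for each finite-dimensional $F$ and each $m$ a formula computing (or estimating) $\inf_B\max_{l\le m}\dist(a_l,B)$ over copies $B\cong F$ in $A$, and let the types say that this quantity is $\geq\e$ for every $F$ simultaneously (the paper's $\bs_{m,n}$ with conditions $\beta_{F,n}(\bar x)\ge 1/m$ are exactly your $\bt_{m,\e}$). Where you genuinely diverge is in the proof of the definability lemma. The paper expresses ``$z$ lies in the span of the matrix units $\bar y$'' by quantifying over the scalar coefficients $\lambda^l_{ij}\in\bbD$, which is why it expands the language by a compact sort for the complex numbers; it then invokes the general machinery of \cite{BYBHU} (stability of $\inf$ over a stable zero-set, Theorem~9.15, and Proposition~9.19) to obtain a formula \emph{exactly} equal to the distance function. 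You instead encode ``$w=qwq$ is a scalar multiple of the projection $q$'' by the spectral-diameter identity $\|h+Mq\|+\|Mq-h\|=2M$ for the real and imaginary parts, which stays in the one-sorted language and replaces the abstract definability theorems by explicit two-sided estimates with a constant $K$; the constant is harmless since $\e$ ranges over $\bbQ^{+}$. The price of your route is the quantitative bookkeeping you acknowledge: your identity is literally valid only when $q$ is an exact projection and $h$ exactly self-adjoint, so in the reconstruction direction you must first replace the approximate matrix units $\bar y$ by exact ones (standard quantitative semiprojectivity of finite-dimensional algebras gives the Lipschitz-type bound you assert, with dimension-dependent constants) and then use uniform continuity of $S_{\bfn}$ in $\bar y$ to transfer the estimate back; none of this is a gap, but it is where the details live. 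The benefit is a self-contained, sort-free construction; the benefit of the paper's route is that the resulting $\beta_{F,m}$ is an exact distance formula and a \emph{stable} formula, which is then reused in the atomicity arguments of the later sections.
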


The statement of these two theorems has to be slightly modified in the general, not necessarily separable, case (see \S \ref{S.proofs}).

Recall that two C*-algebras $A$ and $B$ 
are \emph{elementarily equivalent} if they have the same theory. 
In terms of \S\ref{S.type} this corresponds to saying that $A$ and $B$ define the same functional 
on $\ffF_0$. 

\begin{theorem} \label{T.3}
\begin{enumerate}
\item If $A$ and $B$ are unital, separable UHF algebras then they are isomorphic if and only if they are elementarily equivalent. 
\item There are non-unital, separable UHF algebras which are elementarily equivalent
but not isomorphic. 
\item There are unital separable AF algebras which are elementarily equivalent
but not isomorphic. 
\end{enumerate}
\end{theorem}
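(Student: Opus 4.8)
The plan is to treat the three parts separately, in each case pushing the problem down to the (ordered) $K_0$-group. Throughout, write $B_{\mathfrak n}$ for the unital UHF algebra with supernatural number $\mathfrak n=\prod_p p^{k_p}$, put $P_\infty(\mathfrak n)=\{p: k_p=\infty\}$, and let $\mathbb Z[\mathfrak n]\subseteq\mathbb Q$ be the corresponding subgroup, so $K_0(B_{\mathfrak n})=\mathbb Z[\mathfrak n]=K_0(B_{\mathfrak n}\otimes\mathcal K)$ (the latter as an unscaled ordered group). For (1), isomorphic algebras have the same theory, so only the converse is at issue, and by Glimm's theorem it suffices to read $\mathfrak n(A)$ off from $\mathrm{Th}(A)$. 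For each $n$ let $\sigma_n$ be the sentence in the unital language whose value is the infimum, over all $n^2$-tuples $(x_{ij})_{i,j<n}$ in the unit ball, of $\max_{i,j,k,\ell}\|x_{ij}x_{k\ell}-\delta_{jk}x_{i\ell}\|+\max_{i,j}\|x_{ij}^*-x_{ji}\|+\|\sum_i x_{ii}-1\|$. Since $M_n(\mathbb C)$ is semiprojective (its defining relations are weakly stable), $\sigma_n^A=0$ exactly when $A$ contains an exact unital copy of $M_n(\mathbb C)$, and for a UHF algebra this happens precisely when $n\mid\mathfrak n(A)$. Hence $\{n:\sigma_n^A=0\}=\{n:n\mid\mathfrak n(A)\}$ recovers $\mathfrak n(A)$, and therefore $A$ up to isomorphism.

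For (2) the first observation is that in the non-unital language the $\sigma_n$ are useless: $B_{\mathfrak n}\otimes\mathcal K$ contains a non-unital copy of every $M_n(\mathbb C)$. Instead I would use the divisibility sentences $\delta_p$ asserting ``every projection is a sum of $p$ mutually orthogonal, Murray--von Neumann equivalent subprojections''; a short argument inside an AF algebra (nearby projections are unitarily equivalent, and one has real rank zero and cancellation) gives $\delta_p^A=0$ iff $K_0(A)$ is $p$-divisible, i.e.\ iff $p\in P_\infty(\mathfrak n)$. The core claim is then that $\mathrm{Th}(B_{\mathfrak n}\otimes\mathcal K)$ depends only on $P_\infty(\mathfrak n)$ --- equivalently, is determined by the first-order theory of the ordered group $(\mathbb Z[\mathfrak n],\le)$. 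Granting this, pick $\mathfrak n\ne\mathfrak m$ with $P_\infty(\mathfrak n)=P_\infty(\mathfrak m)$ but $\mathbb Z[\mathfrak n]\not\cong\mathbb Z[\mathfrak m]$ as groups: for instance $\mathfrak n=2^\infty$ and $\mathfrak m=2^\infty\cdot\prod_{p\text{ odd}}p$, whose Baer types agree at $p=2$ but differ at every odd prime, so the groups (and hence, by Elliott's classification of AF algebras, the C*-algebras $B_{\mathfrak n}\otimes\mathcal K$ and $B_{\mathfrak m}\otimes\mathcal K$) are not isomorphic, while the core claim makes them elementarily equivalent.

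For (3) the same template applies to unital AF algebras, using that by Effros--Handelman--Shen and Elliott every countable dimension group with order unit is the scaled $K_0$ of a unital separable AF algebra, together with the (to-be-established) statement that elementary equivalence of such algebras is controlled by the first-order theory of the scaled ordered group $K_0$. Now $(\mathbb Q,\le,1)$ and $(\mathbb Q(\sqrt2),\le,1)$, with the orders inherited from $\mathbb R$, are divisible ordered abelian groups, hence dimension groups, and quantifier elimination for divisible ordered abelian groups (with a name for a distinguished positive element) shows they are elementarily equivalent; they are not isomorphic, being $\mathbb Q$-vector spaces of ranks $1$ and $2$. Realizing them yields the universal UHF algebra $\mathcal Q$ (with $K_0=\mathbb Q$) and a unital separable AF algebra $B$ with $K_0(B)=\mathbb Q(\sqrt2)$, $[1_B]=1$; then $B\not\cong\mathcal Q$ --- indeed $B$ is not even UHF, its $K_0$ having rank $2$ --- while $B\equiv\mathcal Q$. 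This is perfectly consistent with Theorem~\ref{T.UHF}: it simply reflects that $B$ realizes one of the types whose omission characterizes being UHF.

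The genuine obstacle in (2) and (3) is the black box invoked above: that the continuous theory of an AF algebra (respectively, a non-unital UHF algebra) is determined by the first-order theory of its (scaled, ordered) $K_0$. I expect this to come out of the finite-dimensional analysis behind Theorems~\ref{T.UHF} and~\ref{T.AF}, organized as a back-and-forth argument --- a finite configuration of approximate matrix units in an AF algebra is governed, up to small perturbation, by finitely much order data in $K_0$, so an elementary equivalence of the $K_0$-groups lets one transport such configurations between the two algebras to arbitrary precision, and Keisler--Shelah (valid in continuous logic) then upgrades this to an isomorphism of ultrapowers. Making the metric content of ``governed by finitely much $K_0$ data'' precise, rather than just its combinatorial skeleton, is where the real work sits.
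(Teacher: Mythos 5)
Your part (1) is essentially the paper's argument: detect unital copies of $M_n(\mathbb{C})$ by a stable/weakly stable infimum sentence, recover the generalized integer from the theory, and quote Glimm. That part is fine.

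Parts (2) and (3), however, rest entirely on the ``core claim'' you yourself flag as a black box: that the continuous theory of a stabilized UHF algebra (resp.\ a unital AF algebra) is determined by the first-order theory of its ordered (resp.\ scaled ordered) $K_0$-group. This is a genuine gap, not a routine verification. Nothing in the finite-dimensional analysis behind Theorems~\ref{T.UHF} and~\ref{T.AF} gives you a transfer principle from elementary equivalence of $K_0$ to elementary equivalence of the algebras; the back-and-forth you sketch would require showing that the type of an arbitrary tuple (not just a tuple of approximate matrix units) is controlled by finitely much $K_0$-data, and that is precisely the hard content. Indeed the paper explicitly lists ``what is the connection between $K_0(A)$ and the theory of $A$, and what groups correspond to elementarily equivalent algebras?'' among its open questions, so your route asks you to first settle a problem the authors deliberately avoid. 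Until that claim is proved, your candidate pairs ($2^\infty$ versus $2^\infty\cdot\prod_{p\text{ odd}}p$; $\mathbb{Q}$ versus $\mathbb{Q}(\sqrt2)$) are only candidates: you have shown they are non-isomorphic, but not that they are elementarily equivalent.

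The paper's proof of (2) is entirely different and sidesteps this issue: by Dixmier's classification, $\Gamma\mapsto A(\Gamma)$ is a Borel map from countable torsion-free rank-one abelian groups to separable non-unital UHF algebras with $A(\Gamma)\cong A(\Gamma')$ iff $\Gamma\cong\Gamma'$. Since isomorphism of such groups is not smooth, neither is isomorphism of non-unital separable UHF algebras; but the map sending a separable C*-algebra to its theory is Borel with values in a Polish space, hence a smooth invariant, so the theory cannot be a complete invariant --- i.e.\ some elementarily equivalent pair is non-isomorphic. Part (3) then follows by taking unitizations. This argument is non-constructive (it exhibits no explicit pair), which is the price paid for avoiding your black box; conversely, if you could actually prove your transfer principle you would get explicit examples and a finer structural statement, but that is substantially more than what is needed here.
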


Although (2) and (3) appear as negative results, they are potentially more interesting than (1).
In  light of  Elliott's classification result and the Effros--Handelman--Shen range of 
invariant result (see \cite{Dav:C*}), the category of ordered dimension groups  
is equivalent to the category of separable AF algebras. 

We note a fact closely related to the above: 
Every separable unital UHF algebra is an atomic model (Theorem~\ref{T.atomic}) but  
non-unital UHF algebras and AF algebras need not be atomic (Proposition~\ref{P.nonatomic}). 

The main motivation behind this work is a desire to initiate a model-theoretic analysis 
of important classes of C*-algebras, such as nuclear algebras, classifiable algebras, 
or algebras with finite decomposition rank or finite nuclear dimension (see e.g., 
\cite{Win:Ten}). 
A logician-friendly introduction to the subject can be found e.g., in 
\cite{Fa:Selected}. 

\subsection*{A remark on terminology} Common English adjectives, such as normal, compact, or stable, 
have special meaning in different areas of mathematics. In the case of compactness 
these meanings cohere to an uncanny degree. However, this phenomenon is quite rare. 
The adjective `stable' is common both in operator algebras  and in model theory - 
model-theoretic stability even turned out to be relevant to questions about 
operator algebras (see~\cite{FaHaSh:Model1}) In C*-algebras the word `stable' has 
more than one different interpretation (see \cite{Black:Operator}  II.6.6., V.3 or II.8.3, page 162)
Nevertheless, in the present paper the word stability 
will always be  interpreted in its C*-algebraic sense, as in `stable relations' (see \cite{Lor:Lifting}
or \cite[p. 166]{Black:Operator}). 
We hope that our  natural choice of terminology will not cause confusion or annoyance.

%\section{A new look at types} 

\section{Model-theoretic types} 

\label{S.type}

Syntax 
 for the  logic for metric structures  was defined in 
 \cite[\S\S 2.3, 2.4]{FaHaSh:Model2}). 
 We shall briefly recall the definitions for C*-algebras.
   \emph{Terms} are *-polynomials in variables $x_n$, $n\in \bbN$. 
 Formulas are defined recursively: 
 \begin{enumerate}
\item  \emph{Atomic formulas} are of the form $\|t\|$ where $t$ is a term. 
  \item If $\phi$ and $\psi$ are formulas and $f\colon [0,\infty)\to [0,\infty)$ then $f(\phi,\psi)$ is 
 a formula. 
 \item If $\phi$ is a formula and $n,k\in \bbN$ then $\inf_{\|x_n\|\leq k} \phi$ and
 $\sup_{\|x_n\|\leq k} \phi$ are formulas.  
 \end{enumerate}
 A formula is $n$-ary if it has at most $n$ free variables. 
 We sometimes write $\phi(x_1,\dots, x_n)$ in order to emphasize that the 
 free variables of $\phi$ are among $x_1, \dots, x_n$. 
 An $n$-ary formula $\phi$ defines a continuous function on $A^n$ for every C*-algebra $A$
 (see \cite[Lemma~2.2]{FaHaSh:Model2}).  
 
 \emph{Types} are defined in \cite[\S 4.3]{FaHaSh:Model2}. 
 We shall need only types over the empty set and we recall the definition now.  
 A \emph{
 condition} is an expression of the form $\phi\leq r$ or $\phi\geq r$, where $\phi$ is a formula 
 and $r\geq 0$ is a real number.  A condition is $n$-ary if the corresponding formula is $n$-ary. 
  An $n$-ary  condition $\phi\geq r$ is \emph{satisfied} in $A$ by $\bar a \in (A_1)^n$ if $\phi(\bar a)^A\geq r$, and similarly for $\phi \leq r$.  Here $A_1$ denotes the unit ball of $A$. 
  (Allowing only the elements of the unit ball to be realizations of a condition is an innocuous 
  restriction and it will simplify some of our considerations.) 
 A set of conditions is a \emph{type}. If the free variables of all conditions in $\bt$ are included in 
 $x_1,\dots, x_n$ we say that $\bt$ is an \emph{$n$-type}. 
An $n$-type $\bt(x_1, \dots, x_n)$ is  
   \emph{consistent} if there exist a C*-algebra $A$ and $\bar a\in (A_1)^n$ such that
   all conditions in $\bt$ are satisfied by $\bar a$.  
 
 This defines a \emph{partial type}. A consistent 
 type that is maximal (as a set of conditions under the inclusion) 
 is also called a \emph{complete type} and sometimes simply a \emph{type}. 
 
 Complete $n$-types form a dual space of a natural Banach space. 
Fix $n\geq 0$ and let $\fF_n$ be the set of all formulas with free variables 
included in $x_1, \dots, x_n$. Then $\fF$ is closed under the addition and multiplication by 
positive reals. Let~$\ffF_n$ be the real vector space generated by $\fF_n$.

For every $n\in \bbN$ we have that $A^n$ and $\ffF_n$ are in duality---albeit  non-linear. 
If   $A$ is a  C*-algebra 
then every $\phi\in \ffF_n$ defines  
a function 
$f_\phi$ on $A^n$ by 
\[
f_\phi(\bar a)=\phi(\bar a)^{A}. 
\]
This function is in general not linear but it is uniformly continuous (see \cite[Lemma~2.2]{FaHaSh:Model2}). 
If $A$ is a C*-algebra then every  $\bar a\in (A_1)^n$ 
 defines a linear functional on $\ffF_n$ by 
\[
g_{A,\bar a} (\phi)=\phi(\bar a)^A. 
\]
These functionals can be used to define a sup-norm on $\ffF_n$ via
\[
\|\phi\|=
\sup_{\bar a\in (A_1)^n} |\phi(\bar a)^A|
=
\sup_{\bar a\in (A_1)^n} g_{A,\bar a}(\phi). 
\]
where the sup is taken over all C*-algebras $A$ and all $a\in (A_1)^n$. 
By the downward L\"owenheim--Skolem theorem (\cite[Proposition~4.7]{FaHaSh:Model2}) 
it suffices to take the sup over all separable C*-algebras. 

Therefore the space of $n$-types can be identified with a closed linear subspace of the 
dual of $\ffF_n$, denoted $\fT_n$. The \emph{logic topology} on the space of types (see \cite[Definition~8.4]{FaHaSh:Model2}) is the weak*-topology  on $\fT_n$
and the \emph{$d$-metric} on types (see the text before Proposition~8.7 in 
\cite{BYBHU}) corresponds to  the dual Banach space  norm on $\fT_n$.

In model theory one usually studies complete types over a complete theory. 
We cannot afford to do that, since the theory of C*-algebras is not complete and the types that 
naturally occur here are not complete. Since the omitting types theorem is stated for complete 
types in complete theories (\cite[\S 12]{BYBHU}) this causes some additional model-theoretic complications (see examples in \cite{BY:Definability} and a simplified version in \cite{Bice:Brief}). 
 
 The idea of providing the space of all formulas with a Banach space structure  can be extended much 
further. In \cite{BY:On} the formulas of the logic of metric structures were equipped with a natural abelian C*-algebra structure and some preliminary results were proved.

 \section{Stable formulas and definability}

Our working assumption is that all models are models of the theory of C*-algebras. 
Nevertheless, some of our results apply to a more abstract context. 
One difference between our treatment and \cite[\S 9]{BYBHU} is that in the latter definability is
considered with respect to a complete theory (or rather, in a fixed model of this theory), 
while we consider definability over a theory that is not complete (the theory of C*-algebras).

A formula $\psi(\bar x)$ is \emph{stable} if for every  
$\e>0$ there exists $\delta>0$ such that
for every  C*-algebra $A$ and every 
$\bar a\in A^n$ with   $|\psi(\bar a)|<\delta$ there exists $\bar b\in A^n$ 
such that $\|a_i-b_i\|<\e$ for all $i<n$ and $\psi(\bar b)=0$. 

The case when $\psi(\bar x)$ is of the form $\|p(\bar x)\|$ for a *-polynomial $p$, 
the above definition corresponds 
to  the notion of \emph{stable relations} (\cite[Definition~14.1.1]{Lor:Lifting}), 
introduced by Blackadar  under the name of  `partially liftable relations' in 
\cite[Remark~2.34]{Black:Shape}.

By $A_1$ we denote the unit ball of a C*-algebra $A$ and consider its $m$-th power (for $m\in \bbN$) 
as a metric space with respect to the max metric. 
Following \cite[Definition~9.16]{BYBHU} we say that a closed subset $D$ of $(A_1)^m$ 
 is \emph{definable} if there exist $m$-ary formulas $\phi_n(\bar x)$, for $n\in \bbN$ 
 such that 
 \begin{equation}
 \lim_n \phi_n(\bar x)=\dist(\bar x, D)
\tag {*}
 \end{equation}
 for all $\bar x\in (A_1)^m$. 
 
 Typically $D$ will be defined as the zero-set of a formula $\psi(\bar x)$. 
 If this zero-set is definable in every C*-algebra by using the same sequence
 of formulas $\phi_n$ and scuh that the rate of 
 convergence in (*) is uniform over all C*-algebras, then we say that it is \emph{uniformly definable}.  
 The following proof is similar to the proof of \cite[Proposition~8.19]{BYBHU}.

\begin{lemma} 
The zero-set of a  formula $\psi$ is uniformly definable if and only if 
the formula is stable. 
\end{lemma}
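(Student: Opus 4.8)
The plan is to pin down both directions on a single object: the distance predicate $d(\bar x):=\dist(\bar x,Z(\psi))$ to the zero-set. Uniform definability says precisely that $d$ is a uniform limit of formulas, and stability says precisely that smallness of $\psi$ forces smallness of $d$ at a rate independent of the algebra; so the lemma is the assertion that these two conditions coincide. Throughout I will read both ``stable'' and ``zero-set'' relative to the unit ball, to match the definition of a definable set (see the secondary point below).

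First, suppose $\psi$ is stable, and fix for each $\e>0$ a single $\delta(\e)>0$ witnessing stability --- the essential feature being that $\delta(\e)$ does not depend on the C*-algebra. I would then simply write down the approximants:
\[
\phi_n(\bar x)\;=\;\inf_{\|y_1\|\le 1}\cdots\inf_{\|y_n\|\le 1}\Bigl(\max_{i\le n}\|x_i-y_i\|+n\,\psi(\bar y)\Bigr),
\]
which is a formula in the sense of \S\ref{S.type}. Testing against $\bar y\in Z(\psi)$ gives $\phi_n(\bar x)\le d(\bar x)$ in every C*-algebra. Conversely, if $\bar y$ comes within $1$ of realizing the infimum defining $\phi_n(\bar x)$ then $n\,\psi(\bar y)\le\phi_n(\bar x)+1\le d(\bar x)+1\le 3$, so $\psi(\bar y)\le 3/n$; as soon as $n>3/\delta(\e)$, stability moves $\bar y$ to within $\e$ of a genuine zero of $\psi$, and the triangle inequality then yields $d(\bar x)\le \max_i\|x_i-y_i\|+n\,\psi(\bar y)+\e$. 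Taking the infimum over $\bar y$ gives $|\phi_n(\bar x)-d(\bar x)|<\e$ for all such $n$, all $\bar x$, and all C*-algebras at once, which is exactly uniform definability of $Z(\psi)$.

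For the converse, let $(\phi_n)$ witness uniform definability with rate $r_n\to 0$ independent of the algebra, and suppose, for some $\e>0$, that no $\delta$ works for stability. Then I would choose C*-algebras $A_k$ and tuples $\bar a_k\in(A_k)_1^{\,n}$ with $\psi(\bar a_k)<1/k$ but $\dist(\bar a_k,Z(\psi))\ge\e$, and pass to an ultraproduct $B=\prod_{\cU}A_k$ with $\bar a=(\bar a_k)_{\cU}$. By the fundamental theorem of ultraproducts, $\psi(\bar a)^{B}=\lim_{\cU}\psi(\bar a_k)=0$, so $\bar a$ lies in the (closed) zero-set of $\psi$ in $B$ and hence $\dist(\bar a,Z(\psi)^{B})=0$. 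On the other hand, $\phi_n(\bar a)^{B}=\lim_{\cU}\phi_n(\bar a_k)$ stays within $r_n$ of $\lim_{\cU}\dist(\bar a_k,Z(\psi))\ge\e$ because $r_n$ does not depend on $k$; and since uniform definability applies to $B$ too, $\dist(\bar a,Z(\psi)^{B})=\lim_n\phi_n(\bar a)^{B}\ge\e$ --- a contradiction.

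The hard part is this second direction, and specifically the use of the ultraproduct: the whole argument turns on the rate $r_n$ being independent of $k$, which is exactly why the hypothesis must be \emph{uniform} definability and not mere definability in each algebra separately. A secondary, purely bookkeeping point is that the definition of a definable set quantifies over the unit ball whereas stability quantifies over all of $A^n$; I would resolve this by reading both notions relative to the unit ball (equivalently, assuming $Z(\psi)\subseteq(A_1)^n$, which is automatic for stable relations in the sense of \cite{Lor:Lifting}), which also absorbs the minor slack that the zero furnished by stability may have norm slightly exceeding $1$. The structure parallels \cite[Proposition~8.19]{BYBHU}.
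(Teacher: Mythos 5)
Your proof is correct, and in the hard direction (uniformly definable implies stable) it is the paper's argument in all but notation: the same contrapositive, the same sequence of counterexamples with $\psi(\bar a_k)<1/k$ but $\dist(\bar a_k,Z(\psi))\geq \e$, and the same ultraproduct contradiction; your explicit tracking of the uniform rate $r_n$ makes the role of uniformity slightly more visible than the paper's ``$\phi_k(\bar a_n)\geq \e/2$'' shortcut, but it is the same proof. In the converse direction the two arguments share a skeleton --- both realize the distance to the zero-set as $\inf_{\bar y}$ of a penalty on $\psi(\bar y)$ plus $\|\bar x-\bar y\|$ --- but differ in the penalty: the paper invokes \cite[Proposition~2.10]{BYBHU} to get a continuous modulus $\alpha$ with $\dist(\bar x,Z(\psi))\leq\alpha(\psi(\bar x))$ and writes a \emph{single} formula $\inf_{\|\bar y\|\leq 1}\bigl(\alpha(\psi(\bar y))+\|\bar x-\bar y\|\bigr)$ exactly equal to the distance, whereas you use linear penalties $n\,\psi(\bar y)$ and extract a uniform rate directly from the stability modulus $\delta(\e)$, obtaining a genuine approximating sequence. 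Your route is more self-contained (no appeal to Proposition 2.10) at the price of an $\e$ of slack and a fussier verification; the paper's buys exactness in one formula. The bookkeeping issues you flag --- reading the zero-set inside the unit ball, and the zero furnished by stability possibly leaving the unit ball --- are real but equally present, and equally suppressed, in the paper's proof. One cosmetic point: you use $n$ both for the arity of $\bar x$ and for the index of $\phi_n$; these should be distinct symbols.
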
 

\begin{proof} Assume the zero-set $D$ of $\psi$ is uniformly definable. 
Then for every $n$ we can fix $\phi_n$ such that in every C*-algebra $A$ we have 
(using $D^A$ to denote the zero-set of $\psi$ in $A$)
$|\phi_n(\bar b)-\dist(\bar b, D^A)|<1/n$. 
Assume that $\psi$ is not stable. Then for some $\e>0$ and every $n\in \bbN$ there exists
a C*-algebra $A_n$ and $\bar a_n\in (A_n)^m$ such that $\psi(\bar a_n)^{A_n}<1/n$
but $\dist(\bar a_n,D^{A_n})\geq \e$. If $k>2/\e$, then $\phi_k(\bar a_n)^{A_n}\geq \e/2$ for all 
$n$. If $\cU$ is a nonprincipal ultrafilter on $\bbN$ then in the ultraproduct $A=\prod_{\cU} A_n$
the tuple $\bar a$ represented by $(\bar a_n)_{n\in \bbN}$ is such that $\psi(\bar a)^A=0$
but $\phi_l(\bar a)^A\geq \e/2$ for all $l\geq 2/\e$, a contradiction. 

Now assume $\psi$ is stable. By \cite[Proposition~2.10]{BYBHU} 
we can fix a continuous function $\alpha\colon [0,2]\to [0,2]$ (2 is the diameter of the unit ball) 
such that $\alpha(0)=0$, 
$\alpha(2)=2$ and with $D_\psi$ denoting the zero-set of $\psi$ we have  
$\dist(\bar x, D_\psi) \leq\alpha(\psi(\bar x))$ for all $\bar x$. 
Then the formula
\[
\phi(\bar x)=\inf_{\|y\|\leq 1}( \alpha(\psi(\bar y)+\|x-y\|)
\]
is equal to $\dist(\bar x, D_\psi)$ in every C*-algebra $A$. 
\end{proof}

By the above, a subset of a C*-algebra is \emph{definable} 
in the sense of \cite{BYBHU}
iff it is equal to the zero set of a stable formula.

We also note that the  formula 
\[
\rho_p(x)=\|x-x^*\|+\|x^2-x\|
\]
is stable and that its zero set is the set of all projections. 
This is an easy application of continuous functional calculus (see e.g., \cite{Lor:Lifting}). 

%We record a well-known fact. 

%\begin{lemma} Each of the following sets of operators is definable in every C*-algebra. 
%\begin{enumerate}
%\item $\{a: a $ is self-adjoint$\}$, 
%\item $\{a: a $ is a projection$\}$, 
%\item $\{a : a$ is a unitary$\}$, 
%\item $\{a: a$ is a partial isometry$\}$. 
%\end{enumerate}
%\end{lemma} 

%\begin{proof} We only need to provide a stable formula
%whose zero set is the set in question. For (1) consider $\|x-x^*\|$. 
%This is stable because with $y=\frac 12 (x+x^*)$ we have $\|x-y\|\leq \|x-x^*\|$. 
%For (2), take $\gamma_p(x)=\|x-x^*\|+\|x-x^2\|$. 
%Stability follows by (1) and continuous functional calculus. 
%For (3), take $\|xx^*-1\|+\|x^*x-1\|$. If the value is small enough then $x$ is invertible, in which 
%case the unitary from its polar decomposition belongs to $C^*(x)$ and it is near $x$. 
%
%For (4), take $\gamma_p(x^*x)$. 
%\end{proof} 

\begin{lemma} \label{L.alpha} 
For every $n$ there exist  stable formulas $\alpha_n$ and $\alpha_n^u$
 in $n^2$ free variables whose 
zero set in any C*-algebra is the set of matrix units of copies of $M_n(\bbC)$
and the set of matrix units of unital copies of $M_n(\bbC)$, respectively. 
\end{lemma}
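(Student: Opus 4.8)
The plan is to exhibit $\alpha_n$ and $\alpha_n^u$ explicitly from the defining relations of a system of matrix units, and then to obtain stability from the (classical) fact that those relations are stable; uniformity of the stability modulus over all C*-algebras will come for free from the preceding Lemma. Index the $n^2$ variables as $x_{ij}$, $1\le i,j\le n$, and set
\[
\alpha_n(\bar x)=\sum_{i,j}\|x_{ij}^*-x_{ji}\|+\sum_{i,j,k,l}\|x_{ij}x_{kl}-\delta_{jk}x_{il}\|+\max\bigl(0,\,1-\|x_{11}\|\bigr),
\]
\[
\alpha_n^u(\bar x)=\alpha_n(\bar x)+\sup_{\|y\|\le 1}\Bigl(\bigl\|({\textstyle\sum_i x_{ii}})y-y\bigr\|+\bigl\|y({\textstyle\sum_i x_{ii}})-y\bigr\|\Bigr).
\]
Both are formulas in the sense of \S\ref{S.type} with free variables among the $x_{ij}$. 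I would check the zero sets directly. If $\alpha_n(\bar e)=0$ in a C*-algebra $A$, then the matrix-unit relations hold exactly and $\|e_{11}\|=1$, so $e_{11}$ is a nonzero projection and $E_{ij}\mapsto e_{ij}$ extends to a $*$-homomorphism $M_n(\bbC)\to A$, which is injective since $M_n(\bbC)$ is simple and the image of $E_{11}$ is nonzero; hence $\mathrm{span}\{e_{ij}\}$ is a copy of $M_n(\bbC)$ with matrix units $\bar e$, and the converse is clear. (The last summand of $\alpha_n$ only rules out $\bar e=0$, which otherwise also satisfies the relations.) For $\alpha_n^u$ the extra supremum vanishes precisely when $\sum_i e_{ii}$ is a two-sided unit of $A$; if $\bar e$ is a genuine system of matrix units this forces $A$ to be unital with $\sum_i e_{ii}=1_A$, i.e.\ the copy is unital, and conversely.

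For the stability of $\alpha_n$: the relations of an $n\times n$ system of matrix units are stable (\cite{Lor:Lifting}). Concretely, given $\bar e$ with $\alpha_n(\bar e)<\delta$, one perturbs $e_{11}$ to a genuine projection $p$ using stability of $\rho_p$ (noted above), perturbs each $e_{i1}$ (with $w_1=p$) to a partial isometry $w_i$ with $w_i^*w_i=p$ and with the final projections $w_iw_i^*$ pairwise orthogonal (the $e_{ii}$ being approximately orthogonal), and sets $f_{ij}:=w_iw_j^*$; then $\{f_{ij}\}$ is a genuine system of matrix units with $\max_{i,j}\|e_{ij}-f_{ij}\|\to 0$ as $\delta\to 0$, and since $\|e_{11}\|>1-\delta$ forces $f_{11}\ne 0$ for $\delta$ small, $\alpha_n(\bar f)=0$. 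Uniformity of $\delta$ in $A$ is part of stability of the relations, and in any case follows from the preceding Lemma: a nonuniform failure would give, in an ultraproduct, an exact system of matrix units at distance $\ge\e$ from every exact one, which is impossible.

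The crux is the stability of $\alpha_n^u$. Given $\bar e$ with $\alpha_n^u(\bar e)<\delta$, apply the previous paragraph to $\alpha_n(\bar e)\le\alpha_n^u(\bar e)<\delta$ to obtain genuine matrix units $\bar f$ with $\eta:=\max_{i,j}\|e_{ij}-f_{ij}\|$ small, and put $q:=\sum_i f_{ii}$, a genuine projection with $\|q-\sum_i e_{ii}\|\le n\eta$. The supremum term being $<\delta$ gives $\|qy-y\|<\delta+n\eta$ and $\|yq-y\|<\delta+n\eta$ for all $y$ in the unit ball. The key point is that \emph{a genuine projection $q\in A$ with $\|qy-y\|<1$ and $\|yq-y\|<1$ for all $y$ in the unit ball must be the unit of $A$}: if $(1-q)z\ne 0$ for some $z\in A$, then $w:=z-qz$ is a nonzero element of $A$ with $qw=0$, so for the unit vector $y:=w/\|w\|$ we have $qy-y=-y$ and hence $\|qy-y\|=1$, contradicting the hypothesis; thus $qz=z$ for all $z\in A$, symmetrically $zq=z$, so $q=1_A$ and $A$ is unital. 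Shrinking $\delta$ (and, harmlessly, taking $\e<1/(2n)$) so that $\eta<\e$ and $\delta+n\eta<1$, we get $q=1_A$, so $\bar f$ are the matrix units of a unital copy of $M_n(\bbC)$, $\alpha_n^u(\bar f)=0$, and $\max_{i,j}\|e_{ij}-f_{ij}\|<\e$. Hence $\alpha_n^u$ is stable.

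The step I expect to need the most care is the unital one: ensuring that the zero set of $\alpha_n^u$ is empty in C*-algebras that are not unital and consists exactly of the unital matrix-unit tuples when $A$ is unital — achieved by the ``approximate unit'' supremum term — and then checking that this term does not break stability, which is precisely what the key observation above provides, since a good enough approximate unit lying inside the algebra forces the algebra to be unital. The matrix-unit perturbation itself is routine, and uniformity of all the stability moduli over the class of C*-algebras is given to us by the Lemma relating stability and uniform definability.
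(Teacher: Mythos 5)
Your proof is correct, and it splits into a part that coincides with the paper's and a part that is genuinely different. For $\alpha_n$ the paper takes essentially the same formula, namely $\max_{i,j,k,l}\|\delta_{kl}x_{ij}-x_{ik}x_{lj}\|+\|x_{ij}-x_{ji}^*\|+|\|x_{11}\|-1|$, and simply cites Blackadar--Loring for stability of the matrix-unit relations; your explicit perturbation sketch (projection $p$ near $e_{11}$, partial isometries $w_i$, $f_{ij}=w_iw_j^*$) is the standard proof of that cited fact, so there is no real difference there. Where you diverge is $\alpha_n^u$: the paper sets $\alpha_n^u=\max\bigl(\alpha_n,\ \|1-\sum_i x_{ii}\|\bigr)$, using the constant symbol $1$ for the unit, and derives stability from stability of the projection formula $\rho_p$ (once $\bar f$ are exact matrix units, $1-\sum_i f_{ii}$ is a projection of small norm, hence zero). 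You instead avoid the constant $1$ by adding the supremum term $\sup_{\|y\|\le1}\bigl(\|(\sum_ix_{ii})y-y\|+\|y(\sum_ix_{ii})-y\|\bigr)$, and you prove stability via the observation that an exact projection which is a two-sided approximate unit with tolerance $<1$ must be the unit. Both arguments are sound. The paper's version is shorter but is only literally well-formed when $1$ is in the language, i.e.\ for unital algebras (which is where the paper actually applies $\alpha_n^u$); your version makes sense over arbitrary C*-algebras and, as you note, automatically handles the point that stability forces the ambient algebra to be unital whenever $\alpha_n^u$ is small -- a detail the lemma's ``in any C*-algebra'' phrasing requires and the paper glosses over. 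The price is a slightly longer stability argument.
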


\begin{proof} \label{L.Mn} Let $\alpha_n$ be (with $\delta_{kl}$ being Kronecker's delta)
\[
\max_{1\leq i,j,k,l\leq n}\|\delta_{kl}x_{ij}-x_{ik} x_{lj}\|+\|x_{ij}-x_{ji}^*\|+|\|x_{11}\|-1|. 
\]
The zero set of $\alpha_n$ consists of all $n^2$-tuples $a_{ij}$, for $1\leq i,j\leq n$ which 
satisfy the matrix unit equations. 
Stability is a well-known fact (\cite{Black:Shape}, see also \cite{Lor:Lifting}). 

Let $\alpha_n^u$ be $\max(\alpha_n, \|1-\sum_{1\leq i\leq n} x_{ii}|\|)$. 
Stability follows from the fact that projections are defined by a stable formula. 
\end{proof}

\begin{lemma} \label{L.F} For every finite-dimensional C*-algebra $F$ there exist
  stable formulas $\alpha_F$ and $\alpha_F^u$
 in $m$ ($=\dim F$)  free variables whose 
zero set in any C*-algebra is the set of matrix units of copies of $F$
and the set of matrix units of unital copies of $F$, respectively.   
\end{lemma}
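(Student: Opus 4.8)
The plan is to reduce the lemma to Lemma~\ref{L.alpha} using the block decomposition of $F$. Write $F\cong\bigoplus_{s=1}^k M_{n_s}(\bbC)$, so that $m=\dim F=\sum_{s=1}^k n_s^2$, and index the free variables as $x^{(s)}_{ij}$ for $1\le s\le k$, $1\le i,j\le n_s$. A system of matrix units for a copy of $F$ inside a C*-algebra $A$ is precisely a tuple $(a^{(s)}_{ij})$ such that, for each fixed $s$, the subtuple $(a^{(s)}_{ij})_{i,j}$ is a system of matrix units of a copy of $M_{n_s}(\bbC)$, and moreover $a^{(s)}_{ij}a^{(t)}_{kl}=0$ whenever $s\ne t$; it is a unital such system if in addition $\sum_{s,i}a^{(s)}_{ii}=1$. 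I therefore set
\[
\alpha_F=\max_{1\le s\le k}\alpha_{n_s}\bigl((x^{(s)}_{ij})_{i,j}\bigr)+\max_{\substack{s\ne t\\ 1\le i,j,k,l}}\|x^{(s)}_{ij}x^{(t)}_{kl}\|,\qquad
\alpha_F^u=\max\Bigl(\alpha_F,\ \bigl\|1-\textstyle\sum_{s,i}x^{(s)}_{ii}\bigr\|\Bigr).
\]
By Lemma~\ref{L.alpha} and the description above, the zero set of $\alpha_F$ (resp.\ $\alpha_F^u$) in any $A$ is exactly the set of systems of matrix units of copies (resp.\ unital copies) of $F$, so it remains only to prove stability.

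For stability of $\alpha_F$, fix $\e>0$ and suppose $\alpha_F(\bar a)<\delta$ in some $A$, where $\delta$ will be chosen small. First, since $\alpha_{n_s}((a^{(s)}_{ij})_{i,j})<\delta$ for each $s$, stability of $\alpha_{n_s}$ (Lemma~\ref{L.alpha}) produces an exact system of matrix units $(c^{(s)}_{ij})_{i,j}$ of a copy of $M_{n_s}(\bbC)$ with $\|a^{(s)}_{ij}-c^{(s)}_{ij}\|<\e_1$, for any prescribed $\e_1$. The diagonal sums $q_s=\sum_i c^{(s)}_{ii}$ are then genuine projections, and $\|a^{(s)}_{ij}a^{(t)}_{kl}\|<\delta$ for $s\ne t$ forces, by a routine product estimate, $\|q_sq_t\|$ to be small for $s\ne t$. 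Now I invoke the well-known quantitative stability of the relation ``$k$ mutually orthogonal projections'': there is a unitary $u$ in the unitization of $A$ with $\|u-1\|$ small such that $q_s':=uq_su^*$ are exactly mutually orthogonal. Setting $b^{(s)}_{ij}=uc^{(s)}_{ij}u^*\in A$, conjugation by $u$ preserves every within-block matrix unit relation as well as the condition $\|x^{(s)}_{11}\|=1$; and since $c^{(s)}_{ij}=q_sc^{(s)}_{ij}q_s$ we get $b^{(s)}_{ij}=q_s'b^{(s)}_{ij}q_s'$, whence $b^{(s)}_{ij}b^{(t)}_{kl}=0$ for $s\ne t$ by orthogonality of the $q_s'$. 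Thus $\alpha_F(\bar b)=0$, and $\|a^{(s)}_{ij}-b^{(s)}_{ij}\|\le\e_1+2\|u-1\|<\e$ once $\delta$ (hence $\e_1$ and $\|u-1\|$) is chosen small enough.

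For $\alpha_F^u$ I would run the same argument, noting that the extra hypothesis $\|1-\sum_{s,i}a^{(s)}_{ii}\|<\delta$ makes $\|1-\sum_s q_s\|$ small, and hence $\|1-\sum_s q_s'\|=\|1-\sum_s q_s\|$ small as well; but $\sum_s q_s'$ is a projection, so as soon as this norm is $<1$ it equals $1$, i.e.\ $\sum_{s,i}b^{(s)}_{ii}=1$. Hence $(b^{(s)}_{ij})$ is a unital copy of $F$, $\alpha_F^u(\bar b)=0$, and the same norm estimate applies.

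The one genuinely nontrivial step is the middle one: stability of a maximum of stable formulas is not automatic, and the point is that the correction of the almost-orthogonal diagonal projections must be implemented by a unitary close to $1$, so that the matrix unit relations within each block, which have already been made exact, are not disturbed. This quantitative fact---almost orthogonal projections are uniformly close to genuinely orthogonal ones, via conjugation by a near-identity unitary---is classical and is part of the semiprojectivity of finite-dimensional C*-algebras; I would cite \cite{Black:Shape} and \cite{Lor:Lifting}, exactly as was done for $M_n(\bbC)$ in Lemma~\ref{L.alpha}, rather than reprove it here.
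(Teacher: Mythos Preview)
Your argument is correct and follows the same overall strategy as the paper: decompose $F$ into full matrix blocks, invoke Lemma~\ref{L.alpha} for each block, and handle the cross-block interaction via a known stability fact about projections. There is, however, a small but genuine difference in how the two proofs encode orthogonality of the blocks.

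The paper does \emph{not} impose the cross-block relations $x^{(s)}_{ij}x^{(t)}_{kl}=0$ directly. Instead it adds the single condition $\rho_p\bigl(\sum_{l,i} x^{(l)}_{ii}\bigr)=0$, i.e.\ that the sum of all diagonal matrix units is a projection. Since each block already forces $q_l=\sum_i x^{(l)}_{ii}$ to be a projection, and a finite sum of projections is a projection iff they are mutually orthogonal, this single condition is equivalent to your family of cross-product conditions on the zero set. For stability the paper then simply says ``since $\alpha_k$ and $\rho_p$ are stable, stability of $\alpha_F$ follows''; unpacking that sentence leads to exactly the computation you carried out (perturb each block to exact matrix units, observe the diagonal projections are almost orthogonal, then orthogonalize). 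So your proof is really a spelled-out version of the paper's, with a different but equivalent choice of formula. Your explicit acknowledgement that stability of a maximum of stable formulas is not automatic, and your identification of the key step as the perturbation of almost-orthogonal projections, is more careful than the paper's one-line justification.

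One minor technical remark: the claim that the orthogonalization of the $q_s$ can be realized by a \emph{single} global unitary $u$ close to $1$ is a slightly stronger statement than the bare stability of the relations for $\bbC^k$. It is true, but if you want to avoid that refinement you can instead take, for each $s$, a unitary $u_s$ close to $1$ with $u_s q_s u_s^*=q_s'$ and set $b^{(s)}_{ij}=u_s c^{(s)}_{ij}u_s^*$; the within-block matrix unit relations are preserved block by block, $\sum_i b^{(s)}_{ii}=q_s'$, and cross-block orthogonality follows from orthogonality of the $q_s'$. The unital case then goes through unchanged, since $\sum_s q_s'$ is a projection within $1$ of the identity and hence equals $1$.
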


\noindent{\bf Note:} We shall write $\alpha_k$ and $\alpha_k^u$ instead of $\alpha_{M_k(\bbC)}$ and $\alpha_{M_k(\bbC)}^u$, respectively. 

\begin{proof} Every finite-dimensional C*-algebra is a direct sum of full matrix algebras 
(see e.g., \cite{Dav:C*}). If $F=\bigoplus_{1\leq l\leq n} M_{k(l)}(\bbC)$ (so that 
$m=\sum_{1\leq l\leq n} k(l)^2$ we define 
$\alpha_F $ to be the following formula in variables $x_{ij}^{(l)}$, for $1\leq l\leq n$ 
and $1\leq i,j\leq k(l)$:
\[
\textstyle\max_{1\leq l\leq n} \alpha_{k(l)}(x_{ij}^{(l)}:1\leq i,j\leq k(l))
+\rho_p(\sum_{1\leq l\leq n} \sum_{1\leq i\leq k(l)} x_{ii}^{(l)})
\]
The first part of the formula assures that for each $l$ the  group $x_{ij}^{(l)}$
represents units of a $k(l)\times k(l)$ matrix, and the second part assures that the units 
of these matrices add up to a projection. Since both $\alpha_k$ and $\rho_p$ are stable, 
the stability of $\alpha_F$ follows. 

We finally  let  $\alpha_F^u$ be 
\[
\textstyle
\max(\alpha_F, \|1-\sum_{1\leq l\leq n} \sum_{1\leq i\leq k(l)} x_{ii}^{(l)}\|)
\]
as in Lemma~\ref{L.alpha}. 
\end{proof}

\subsection{An alternative axiomatization of C*-algebras}

In \cite{FaHaSh:Model2} it was proved that the C*-algebras form an elementary class
in the logic for metric structures. 
In order to make the proofs more transparent, we shall introduce an alternative representation 
of C*-algebras as metric structures. 
In \cite[\S 2.3.1]{FaHaSh:Model1}  C*-algebras are represented as one-sorted structures
with sort $U$ for the algebra itself and domains $D_n$, for $n\geq 1$, corresponding to 
$n$-balls of the algebra. 

We shall expand this structure, by adding a second sort $C$ with domains $C_n$, for $n\geq 1$. 
Sort $C$ is always interpreted as the complex numbers, and~$C_n$ is the disk of
all $z$ with $|z|\leq n$. We also add 
the distance function 
\[
d_C(a,b)=|a-b|
\]
as well as constant symbols for all elements of $\bbQ+i\bbQ$. 
In addition to the axioms given in \cite[\S 3.1]{FaHaSh:Model2}, we add the following
\begin{enumerate}
\item field axioms  
for elements of $C$, 
\item  axioms that associate multiplication by  $\lambda\in \bbQ+i\bbQ$ 
(which is a part of 
the language) to the element of $C$ corresponding to $\lambda$. 
\end{enumerate}
We denote this theory by $T_{C^*}'$.
Clearly, every model of $T_{C^*}'$ has a reduct that is a model of $T_{C^*}$ and 
every model of $T_{C^*}$ has the unique expansion to a model of $T_{C^*}'$.

In  \cite[Theorem~9.15]{BYBHU} it was proved that if 
formulas $\phi(\bar x, \bar y)$ and $\psi(\bar y)$ are stable  
then  the formula $(\inf_{\psi(\bar y)=0}) \phi(\bar x, \bar y)$ is stable. 
As pointed out above,~\cite{BYBHU} considers definability over a complete theory but the
result nevertheless applies to our context.

\begin{lemma} \label{L.dist} For every finite-dimensional C*-algebra $F$ and every $m$ 
there are $m$-ary stable formulas $\beta_{F,m}$ 
and $\beta_{F,m}^u$ such that for every C*-algebra
$A$ and $a_1,\dots, a_m$ in the unit ball of $A$ we have 
\[
\beta_{F,m}(a_1,\dots, a_m)^A=\inf_C \max_{1\leq i\leq m} \dist (a_i,C)
\]
where $C$ ranges over isomorphic copies of $F$ in $A$
and 
\[
\beta_{F,m}^u(a_1,\dots, a_m)^A=\inf_C \max_{1\leq i\leq m} \dist (a_i,C)
\]
where $C$ ranges over unital isomorphic copies of $F$ in $A$. 
\end{lemma}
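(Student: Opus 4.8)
The plan is to express the quantity $\inf_C \max_i \dist(a_i, C)$ directly as an infimum over a presentation of $F$ by matrix units, and then apply the stability-preservation result quoted just before the statement (the C*-algebra analogue of \cite[Theorem~9.15]{BYBHU}). Fix a finite-dimensional C*-algebra $F$ with $\dim F = m'$, and recall from Lemma~\ref{L.F} that there is a stable formula $\alpha_F(\bar y)$, in $m'$ variables $\bar y = (y_1, \dots, y_{m'})$, whose zero-set in any C*-algebra $A$ is exactly the set of systems of matrix units spanning a copy of $F$ in $A$. The key observation is that a copy $C \cong F$ in $A$ is precisely the linear span (with complex coefficients) of such a system of matrix units, so that for $a \in A_1$ one has $\dist(a, C) = \dist(a, \operatorname{span}_{\bbC}\{y_1, \dots, y_{m'}\})$, and this latter distance is itself a formula in $a$ and $\bar y$ --- here is where the second sort $C$ of the theory $T_{C^*}'$ earns its keep, since we may quantify over complex scalars. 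Concretely, $\dist(a, \operatorname{span}_{\bbC}\{\bar y\}) = \inf_{\bar z} \| a - \sum_{k \le m'} z_k y_k \|$ where $\bar z$ ranges over a bounded domain $C_N$ of the scalar sort (bounded because $a$ and the $y_k$ lie in unit balls, so the optimal coefficients are bounded by a constant depending only on $F$).

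The next step is to assemble these pieces. Define
\[
\beta_{F,m}(x_1, \dots, x_m) = \inf_{\alpha_F(\bar y) = 0}\ \max_{1 \le i \le m}\ \inf_{\bar z^{(i)} \in C_N^{m'}} \Bigl\| x_i - \sum_{k \le m'} z^{(i)}_k y_k \Bigr\|.
\]
By the discussion above, evaluating this in any C*-algebra $A$ on a tuple $\bar a \in (A_1)^m$ yields exactly $\inf_C \max_i \dist(a_i, C)$ with $C$ ranging over copies of $F$: the outer $\inf$ over the zero-set of $\alpha_F$ ranges over all matrix-unit systems, hence over all copies $C$, and the inner data computes $\dist(a_i, C)$. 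It remains to check stability. The inner formula $\psi(\bar x, \bar y) = \max_i \inf_{\bar z^{(i)}} \| x_i - \sum_k z^{(i)}_k y_k \|$ is a combination of norms of $*$-polynomials with inf-quantifiers over a compact scalar domain; one checks it is stable essentially as for $\rho_p$ and $\alpha_n$ --- near its zero-set each $x_i$ is already close to the span, and one perturbs each $x_i$ onto the span. Then applying the quoted stability-preservation lemma with $\phi = \psi$ and $\psi = \alpha_F$ gives that $\beta_{F,m}(\bar x) = (\inf_{\alpha_F(\bar y)=0})\, \psi(\bar x, \bar y)$ is stable. The unital version $\beta_{F,m}^u$ is obtained verbatim, replacing $\alpha_F$ by $\alpha_F^u$ from Lemma~\ref{L.F}, which is stable and whose zero-set picks out the unital copies of $F$.

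The main obstacle I anticipate is not the application of the stability machinery, which is essentially formal once the formulas are in place, but rather making precise that $\dist(a, C)$ for a \emph{fixed} copy $C$ --- i.e.\ $\dist$ to the linear span of a matrix-unit system --- is genuinely given by a formula with the required uniformity, and in particular that the scalar coefficients may be confined to a bounded domain $C_N$ with $N$ depending only on $\dim F$ (and not on $A$). This requires a uniform bound: if $\|a\| \le 1$ and $\bar y$ is a matrix-unit system for $F$, the element of $\operatorname{span}_{\bbC}\{\bar y\}$ nearest to $a$ has norm at most $1$, hence (since the matrix units form a basis whose dual basis has norm controlled purely by the structure of $F$) its coordinates are bounded by a constant $N = N(F)$. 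One also has to be slightly careful near the zero-set of $\alpha_F$, where $\bar y$ only approximately satisfies the matrix-unit relations: continuity of all the formulas involved handles this, but it should be remarked explicitly. Once these uniform-boundedness and continuity points are dispatched, the rest follows by combining Lemma~\ref{L.F} with the stability-preservation theorem.
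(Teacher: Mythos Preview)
Your approach is correct and essentially the same as the paper's: both encode copies of $F$ via the stable matrix-unit formula $\alpha_F$ from Lemma~\ref{L.F}, quantify over bounded scalar coefficients to express distance to the span, and then invoke the stability/definability machinery of \cite[\S 9]{BYBHU}. The only cosmetic difference is that the paper first writes an auxiliary formula $\beta^0_{F,m}$ (using an ordinary $\inf_{\bar x}$ with $\alpha_F(\bar x)$ added as a penalty term, and scalars confined to the unit disc $\bbD$) whose zero-set consists of tuples lying in some copy of $F$, checks its stability, and then invokes \cite[Proposition~9.19]{BYBHU} to pass to the distance formula---whereas you write the distance directly as an $\inf$ over the definable zero-set of $\alpha_F$ and appeal to \cite[Theorem~9.15]{BYBHU}.
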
 

\begin{proof} 
Fix $m$. Let us first prove the case when $F=M_k(\bbC)$.  Let 
$\beta^0_{k,m}(y_1,\dots, y_m)$ be
\[
\textstyle
\inf_{\bar x}(\alpha_k(\bar x)+  
 \inf_{\bar \lambda}\max_{1\leq l\leq m} \|y_l-\sum_{1\leq i,j\leq k} \lambda_{ij}^l x_{ij}\|), 
 \]
 where $\bar x$ ranges over $k^2$-tuples $x_{ij}$, $1\leq i,j\leq k$ in 
 the unit ball of the algebra and 
 $\bar \lambda$ ranges over $l\cdot k^2$-tuples $\lambda_{ij}^l$ in $\bbD=\{z\in \bbC: |z|\leq 1\}$. 
 Clearly $\beta^0_{k,m}(\bar a)^A=0$ if and only if there is a copy of $M_k(\bbC)$ in $A$ 
 including all $a_i$, $1\leq i\leq k$. Also, since we are quantifying  over the compact set 
 $\bbD^{lk^2}$, 
 the formula $\beta^0_{k,m}$ is stable by 
\cite[Theorem~9.15]{BYBHU}. 
By \cite[Proposition~9.19]{BYBHU} there exists a formula $\beta_{M_k(\bbC),m}$ 
(henceforth denoted  $\beta_{k,m}$) as required. 

The case when $F$ is an arbitrary finite-dimensional C*-algebra is only notationally 
different (cf. the proof of Lemma~\ref{L.F} ). Finally, $\beta_{F,m}^u$ is obtained by replacing $\alpha_{F,m}$ with~$\alpha_{F,m}^u$. 
 \end{proof}

\section{Proofs of Theorem~\ref{T.UHF} and Theorem~\ref{T.AF}} 
\label{S.proofs} 

A C*-algebra $A$ is \emph{locally matricial}, or LM, if for every finite subset $\bar a$ of 
$A$ and $\e>0$ there exists a $k$ and a *-homomorphism of $M_k(\bbC)$ into $A$
such that all elements of $\bar a$ are within $\e$ of its range. 
 By a classical result of Glimm (\cite{Glimm:On}, see also \cite{Dav:C*})
in the separable case this is equivalent to $A$ being UHF. 
It was shown in \cite{FaKa:Nonseparable} 
that being LM is not equivalent to being UHF for nonseparable C*-algebras. 

One similarly defines LF algebras as the algebras in which every finite set is within $\e$ of a 
copy of a finite-dimensional C*-algebra. 
By a classical result of Bratteli (\cite{Ell:On}, see also  \cite{Dav:C*})
in the separable case this is equivalent to $A$  being AF.

By Glimm's and Bratteli's results, Theorem~\ref{T.LM} and Theorem~\ref{T.LF}  imply
 Theorem~\ref{T.UHF}
and Theorem~\ref{T.AF}, respectively. 

\begin{thm}\label{T.LM} There are countably many types 
 such that 
a separable C*-algebra $A$ is LM if and only if it omits all of these types. 
\end{thm}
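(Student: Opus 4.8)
The plan is to express the negation of the LM property as the omission of a countable family of types built from the formulas $\beta_{k,m}$ of Lemma~\ref{L.dist}. The key observation is that $A$ is LM if and only if for every $m$-tuple $\bar a$ in the unit ball and every $\e>0$ there exists $k$ with $\beta_{k,m}(\bar a)^A<\e$; equivalently, $\inf_k \beta_{k,m}(\bar a)^A=0$ for every $\bar a\in(A_1)^m$ and every $m$. So $A$ fails to be LM precisely when there is some $m$ and some $\bar a\in(A_1)^m$ with $\inf_k\beta_{k,m}(\bar a)^A\geq 1/j$ for some $j\in\bbN$. This suggests, for each pair $(m,j)$, the $m$-type
\[
\bt_{m,j}=\{\beta_{k,m}(x_1,\dots,x_m)\geq 1/j : k\in\bbN\}.
\]
Then $A$ omits $\bt_{m,j}$ for all $m,j$ if and only if for every $m$ and every $\bar a\in(A_1)^m$ we have $\inf_k\beta_{k,m}(\bar a)^A<1/j$ for all $j$, i.e. $\inf_k\beta_{k,m}(\bar a)^A=0$, i.e. $A$ is LM. Since each $\beta_{k,m}$ is an honest formula (Lemma~\ref{L.dist}), each $\bt_{m,j}$ is a genuine type, and there are countably many of them.

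First I would record the equivalence between the LM property and the conjunction of the conditions $\inf_k\beta_{k,m}(\bar a)^A=0$, which is immediate from the definition of $\beta_{k,m}$ as $\inf_C\max_i\dist(a_i,C)$ over copies $C$ of $M_k(\bbC)$: a finite set is within $\e$ of a copy of some $M_k(\bbC)$ exactly when some $\beta_{k,m}(\bar a)^A<\e$. Note one only needs matrix algebras $M_k(\bbC)$ here, not general finite-dimensional algebras, because the LM definition refers to $M_k(\bbC)$. Second I would check that each $\bt_{m,j}$ is consistent (realized, e.g., in $\bbC$ or in any non-LM algebra such as a UHF algebra of the wrong type), so that these are legitimate partial types. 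Third I would verify the biconditional: $A$ realizes some $\bt_{m,j}$ iff there is a tuple $\bar a$ with $\beta_{k,m}(\bar a)^A\geq 1/j$ for all $k$, iff $\inf_k\beta_{k,m}(\bar a)^A\geq 1/j>0$, iff the LM condition fails at $(m,\bar a)$; quantifying over $m$ and $j$ gives that $A$ omits all $\bt_{m,j}$ iff $A$ is LM. Combined with Glimm's theorem this yields Theorem~\ref{T.UHF} in the separable case.

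The main subtlety is not in any one step but in the phrase ``in the separable case'': in a nonseparable algebra omitting all the $\bt_{m,j}$ still gives exactly the LM property, but by \cite{FaKa:Nonseparable} LM is strictly weaker than UHF there, so the statement of Theorem~\ref{T.UHF} must be read as being about the LM property (this is the promised modification in \S\ref{S.proofs}). A second point worth being careful about is that realizations of conditions are restricted to the unit ball; since $\beta_{k,m}$ already measures distance from $\bar a$ to a copy of $M_k(\bbC)$ inside $A$ and the $x_{ij}$, $\lambda_{ij}^l$ in its definition range over the unit ball and $\bbD$ respectively, this restriction is exactly the harmless one discussed after the definition of satisfaction, and local approximation of an arbitrary finite subset reduces to local approximation of a tuple from the unit ball by rescaling. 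I expect the verification of the biconditional, i.e. that $\inf_k\beta_{k,m}=0$ genuinely characterizes LM with the correct handling of the $\e$--$\delta$ local approximation, to be the only place requiring any care.
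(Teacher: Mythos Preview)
Your proof is correct and essentially identical to the paper's: the paper's types $\bt_{m,n}$ consist of the conditions $\beta_{k,n}(\bar x)\geq 1/m$ for $k\geq 2$ together with $\|x_i\|\leq 1$, which up to the index relabelling $(m,j)\leftrightarrow(n,m)$ and the explicit unit-ball conditions is exactly your family. One small slip (irrelevant to the argument, since consistency of the types is not needed for the biconditional and the paper does not address it either): $\bbC$ and every UHF algebra are LM and hence \emph{omit} all the $\bt_{m,j}$, so they are not examples of realization; a correct witness would be any non-LM algebra such as $C([0,1])$.
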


\begin{proof}%[Proof of  Theorem~\ref{T.UHF}] 
We shall  produce a countable family of types $\bt_{m,n}$, for $m,n\in \bbN$, so that 
$\bt_{m,n}$ is an $n$-type
and a separable C*-algebra $A$ is UHF if and only if it omits all of these types.

For  $k$ and $n$ in $\bbN$ let $\beta_{k,n}$ denote the formula $\beta_{M_k(\bbC),n}$ (as 
in Lemma~\ref{L.dist}). 
Let $\bt_{m,n}$ be the $n$-type in $x_1,\dots, x_n$ consisting of all the conditions  
$\beta_{k,n}(\bar x)\geq 1/m$ for $k\geq 2$ and all the conditions $\|x_i\|\leq 1$, for $i\leq n$.

Type $\bt_{m,n}$ is realized by $a_1,\dots, a_n$ in algebra $A$ if and only if
each $a_i$ belongs to the unit ball and 
 every subalgebra $C$ of $A$ that is isomorphic to a full matrix algebra
 is such that $\max_{1\leq i\leq n} \dist(a_i,C)\geq 1/m$. 
\end{proof}

\begin{thm}\label{T.LF} 
There are countably many types 
 such that 
a separable C*-algebra $A$ is LF if and only if it omits all of these types. 
\end{thm}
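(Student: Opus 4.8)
The plan is to mimic the proof of Theorem~\ref{T.LM} verbatim, replacing full matrix algebras with arbitrary finite-dimensional C*-algebras and appealing to Lemma~\ref{L.dist} for the formulas $\beta_{F,m}$ rather than only the $\beta_{k,m}$. First I would fix an enumeration $F_1, F_2, \dots$ of (isomorphism classes of) finite-dimensional C*-algebras; since each such algebra is a finite direct sum of matrix algebras, there are only countably many of them, so this enumeration exists. For each $j$ we have the $n$-ary stable formula $\beta_{F_j,n}$ from Lemma~\ref{L.dist}, whose value at $\bar a \in (A_1)^n$ is $\inf_C \max_{i\leq n}\dist(a_i,C)$ with $C$ ranging over isomorphic copies of $F_j$ inside $A$.

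Next I would define, for $m,n\in\bbN$, the $n$-type $\bt_{m,n}$ in the variables $x_1,\dots,x_n$ to consist of all the conditions $\beta_{F_j,n}(\bar x)\geq 1/m$ for every $j$ such that $F_j$ is \emph{not} already captured—more precisely, we want $\bt_{m,n}$ to say that every copy of a finite-dimensional subalgebra misses one of the $a_i$ by at least $1/m$. Concretely $\bt_{m,n}$ consists of the conditions $\|x_i\|\leq 1$ for $i\leq n$ together with $\beta_{F_j,n}(\bar x)\geq 1/m$ for all $j\in\bbN$. Then $\bt_{m,n}$ is realized by $a_1,\dots,a_n\in (A_1)^n$ precisely when, for every finite-dimensional subalgebra $C\subseteq A$, we have $\max_{i\leq n}\dist(a_i,C)\geq 1/m$. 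Therefore $A$ omits every $\bt_{m,n}$ if and only if for every $n$-tuple $\bar a$ from $A_1$ and every $m$ there is a finite-dimensional subalgebra $C\subseteq A$ with $\max_i\dist(a_i,C)<1/m$, which after scaling arbitrary finite subsets of $A$ into the unit ball is exactly the definition of $A$ being LF. This gives the forward and backward implications simultaneously, just as in Theorem~\ref{T.LM}.

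One subtlety worth addressing is whether the subalgebra $C$ witnessing ``LF'' can be taken to be a copy of a fixed $F_j$ rather than merely finite-dimensional: since any finite-dimensional subalgebra is a copy of some $F_j$, and we have a $\beta_{F_j,n}$ for each, the ``$\inf$'' built into the type over all $j$ handles this automatically—realizing $\bt_{m,n}$ forces $\beta_{F_j,n}(\bar a)\geq 1/m$ for every $j$, hence $\dist(\bar a, C)\geq 1/m$ for every finite-dimensional $C$. Conversely, if $A$ is LF then for any $\bar a$ there is some finite-dimensional $C\cong F_j$ with all $a_i$ within $1/m$ of $C$, so the condition $\beta_{F_j,n}(\bar a)\geq 1/m$ fails and $\bt_{m,n}$ is omitted. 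I expect the main (very minor) obstacle to be purely bookkeeping: making sure the countable family $\{\bt_{m,n} : m,n\in\bbN\}$ is genuinely countable despite each $\bt_{m,n}$ containing infinitely many conditions—this is fine because a type is a single object and we only need countably many types—and checking that the passage between ``finite subsets of $A$'' and ``tuples in the unit ball'' is harmless, exactly as noted parenthetically in \S\ref{S.type}. No new ideas beyond those in the proof of Theorem~\ref{T.LM} are needed.
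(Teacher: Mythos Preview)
Your proposal is correct and follows essentially the same approach as the paper's own proof: the paper defines the types $\bs_{m,n}$ to consist of the conditions $\|x_i\|\leq 1$ together with $\beta_{F,n}(\bar x)\geq 1/m$ as $F$ ranges over all finite-dimensional C*-algebras, which is exactly your $\bt_{m,n}$ modulo naming and the explicit enumeration of the $F_j$. Your extra remarks on countability and the unit-ball reduction are accurate but not spelled out in the paper, which simply states the analogue of the Theorem~\ref{T.LM} argument.
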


\begin{proof}%[Proof of  Theorem~\ref{T.AF}]
This proof is analogous to the above proof of Theorem~\ref{T.UHF}. 
 We shall  produce a countable family of types $\bs_{m,n}$, for $m,n\in \bbN$, so that 
$\bs_{m,n}$ is an $n$-type
and a separable C*-algebra $A$ is AF if and only if it omits all of these types.

Let $\bs_{m,n}$ be the $n$-type in $x_1,\dots, x_n$ consisting of all the conditions  
$\beta_{F,n}(\bar x)\geq 1/m$, where $F$ ranges over all finite-dimensional 
C*-algebras and all the conditions $\|x_i\|\leq 1$, for $i\leq n$. 

Type $\bs_{m,n}$ is realized by $a_1,\dots, a_n$ in algebra $A$ if and only if
each $a_i$ belongs to the unit ball and 
 every subalgebra $C$ of $A$ that is isomorphic to a finite-dimensional C*-algebra
 is such that $\max_{1\leq i\leq n} \dist(a_i,C)\geq 1/m$. 
 \end{proof} 

\begin{proof}[Proof  of Theorem~\ref{T.3}]
(1) We prove that if  $A$ and $B$ are unital, separable UHF algebras then they are isomorphic if and only if they are elementarily equivalent. 
By Glimm's theorem (\cite{Glimm:On}) a complete isomorphism invariant for unital UHF algebras
is given by the \emph{generalized integer} $\bfn$ defined as the formal product
\[
\textstyle \bfn(A)=\prod_{p\text{ prime}} p^{n(A)}
\]
where $n(A)$ is the supremum of all $n$ such that $A$ includes a unital copy of~$M_{p^n}(\bbC)$. 
 
By Lemma~\ref{L.alpha}, $A$ has a unital copy of $M_k(\bbC)$ if and only 
if $\inf_{\bar x} \alpha_k^u(\bar x)^A=0$. 
Therefore $\bfn(A)$ can be recovered from the theory of $A$, and 
the conclusion follows by Glimm's theorem.

(2) We  give a non-constructive proof that  there are non-unital, separable UHF algebras which are elementarily equivalent but not isomorphic. 
Essentially by Dixmier's classification result for these algebras (\cite{Dix:Some}), 
to every countable, torsion free, rank one abelian group $\Gamma$ one can associate a
separable non-unital UHF algebra $A(\Gamma)$ so that 
$A(\Gamma)\cong A(\Gamma')$ 
if and only if $\Gamma\cong\Gamma'$. 
The group $\Gamma$ is $K_0(A(\Gamma))$ and
the map $\Gamma\mapsto A(\Gamma)$ is implemented by a Borel map from the 
standard Borel space of countable groups into the standard Borel space of 
separable C*-algebras (\cite{FaToTo:Descriptive}).
Since the isomorphism of 
countable, torsion free, rank one abelian groups is not a smooth Borel equivalence relation
(see e.g., \cite{Tho:On}), we conclude that the isomorphism of 
non-unital, separable, UHF algebras is not smooth either.  
Finally, the computation of the theory of a separable C*-algebra is given by a Borel map (\cite{FaToTo:Descriptive}) and the theory (being a smooth invariant) 
therefore cannot be a complete invariant for non-unital, separable,  UHF algebras.

(3) In order to see that  there are unital separable AF algebras which are elementarily equivalent
but not isomorphic, consider unitizations of algebras constructed in (2). 
\end{proof}

\section{Prime models} 

A model of a theory $T$ is \emph{prime} if it is isomorphic to an elementary submodel of
every other model of $T$.  

\begin{thm} \label{T.atomic} Every separable unital UHF algebra is a prime model. 
\end{thm}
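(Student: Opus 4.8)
The plan is to show that a separable unital UHF algebra $A$ embeds as an elementary submodel into an arbitrary model $B$ of $T_{C^*}$ (in the expanded language $T_{C^*}'$, but this is immaterial since models of one theory are canonically models of the other). First I would recall Glimm's structure theorem: $A = \bigotimes_{j} M_{k(j)}(\bbC)$, so $A$ is the direct limit of an increasing chain of unital copies $A_0 \subseteq A_1 \subseteq \cdots$ of full matrix algebras $M_{d(i)}(\bbC)$, where $d(i) = \prod_{j \le i} k(j)$ and each inclusion is unital. The key inputs are Lemma~\ref{L.alpha} (the stable formulas $\alpha_n^u$ whose zero set is the set of matrix units of unital copies of $M_n(\bbC)$) and the fact, established in the proof of Theorem~\ref{T.3}(1), that $\inf_{\bar x}\alpha_k^u(\bar x)^C = 0$ holds in any unital C*-algebra $C$ if and only if $C$ contains a unital copy of $M_k(\bbC)$.

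The heart of the argument is that $B$ must contain unital copies of every $M_{d(i)}(\bbC)$. This is where elementary equivalence enters: since $A$ is UHF and $A \equiv B$, and since $\inf_{\bar x}\alpha_{d(i)}^u(\bar x)$ is a sentence evaluating to $0$ in $A$, it evaluates to $0$ in $B$ as well; stability of $\alpha_{d(i)}^u$ then promotes this infimum-zero statement to the actual existence of matrix units in $B$, i.e. an honest unital embedding $M_{d(i)}(\bbC) \hookrightarrow B$. Next I would build a coherent chain of such embeddings. One cannot simply take the embeddings for each $i$ independently; instead I would use a back-and-forth/chain construction: having fixed a unital embedding $\iota_i \colon A_i \to B$, I need to extend it to $\iota_{i+1}\colon A_{i+1}\to B$ compatibly. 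Since $A_{i+1} \cong M_{k(i+1)}(\bbC)\otimes A_i$ and the relative commutant of a unital copy of $M_{d(i)}(\bbC)$ inside $M_{d(i+1)}(\bbC)$ is a unital copy of $M_{k(i+1)}(\bbC)$, extending $\iota_i$ amounts to finding a unital copy of $M_{k(i+1)}(\bbC)$ in $B$ commuting with $\iota_i(A_i)$; this is again expressed by the vanishing of a stable formula — one with parameters naming the matrix units of $\iota_i(A_i)$, asserting that there exist matrix units for $M_{k(i+1)}(\bbC)$ that commute with the given parameters — and its value in $B$ equals its value in $A$ (where it is $0$, witnessed inside $A$), again by elementary equivalence applied with those parameters, plus stability. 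Taking the union $\iota = \bigcup_i \iota_i$ and extending by continuity gives a unital embedding $\iota\colon A \to B$.

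It remains to check that $\iota$ is an elementary embedding. Since $A$ is the closure of $\bigcup_i A_i$ and each $A_i$ is a full matrix algebra, every element of $A$ is approximated by $\bbQ+i\bbQ$-linear combinations of the chosen matrix units; hence $\iota(A)$ is determined, together with the elementarily-equivalence-preserved values of all formulas on the matrix units, so that $\phi(\bar a)^A = \phi(\iota \bar a)^B$ for every tuple $\bar a$ from a dense subset — the formulas being uniformly continuous (\cite[Lemma~2.2]{FaHaSh:Model2}), this passes to all of $A$. Actually the cleanest route is: the type of the matrix-unit system of each $A_i$ over the empty set is an \emph{isolated} type (it is the zero set of the stable formula $\alpha_{d(i)}^u$, which is definable, so the type is principal), and any two realizations of a principal complete type in models of the same theory have the same type; one checks the coherence of the chain of types and concludes that $\iota$ preserves all formulas. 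The main obstacle I anticipate is precisely the coherence step: ensuring that the embeddings $\iota_i$ chosen at successive stages form a directed system rather than a sequence of unrelated copies, and verifying that the parametrized stable formulas controlling the extension step indeed take value $0$ in $B$ — this requires care because elementary equivalence is a statement about sentences without parameters, so the parameters from $\iota_i(A_i)$ must be eliminated by an inf over their defining stable formula before invoking $A \equiv B$, and only then reinstated via stability.
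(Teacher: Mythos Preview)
Your approach --- constructing the embedding $\iota\colon A\to B$ directly and then checking elementarity --- differs from the paper's, which instead shows $A$ is \emph{atomic} (every type realized in $A$ is principal) and then invokes the standard fact that a separable atomic model is prime. Both routes are viable, but yours has a genuine gap at the pivotal step.

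The gap is in the claim: ``the type of the matrix-unit system of each $A_i$ over the empty set is an isolated type (it is the zero set of the stable formula $\alpha_{d(i)}^u$, which is definable, so the type is principal).'' Definability of the zero set does \emph{not} by itself imply principality: the zero set of a stable formula is a definable set, but nothing prevents it from containing tuples of distinct complete types. (Indeed the paper remarks, just after the proof, that in some simple nuclear unital algebras two unital copies of $M_2(\bbC)$ fail to be conjugate.) What you need --- and never invoke --- is that any two unital copies of $M_{d(i)}(\bbC)$ in a UHF algebra $A$ are conjugate by an automorphism of $A$. This is exactly the ingredient the paper uses, via Lemma~\ref{L.1}(2), to conclude that $\gamma_{F,\bar a}$ (or, in your formulation, $\alpha_{d(i)}^u$) determines a complete principal type. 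The same conjugacy fact is also what makes your parameter-elimination step in the coherent-chain construction go through: the $\sup$ over all zeros of $\alpha_{d(i)}^u$ being $0$ in $A$ requires that \emph{every} unital copy of $M_{d(i)}$ (not just the standard $A_i$) admits a commuting unital $M_{k(i+1)}$, and this reduces to the standard copy only via conjugacy.

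Once the conjugacy fact is inserted, your argument is correct but more laborious than the paper's. Note also that the paper proves separately (Lemma~\ref{L.elem}) that every unital embedding of $A$ into an elementarily equivalent $B$ is automatically elementary; granting that lemma, your entire elementarity verification becomes unnecessary once the coherent chain is built.
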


The proof depends on a sequence of lemmas. $\bt_A(\bar a)$ denotes the type of $\bar a$ in $A$ and `type' means `type over the empty set' unless otherwise specified.  Let us first record three obvious facts

\begin{lemma}\label{L.1} 
\begin{enumerate}
\item Suppose that $\varphi$ is a stable formula and $A$ is a model such that for $\bar a,\bar b \in A$, if $\varphi(\bar a) = \varphi(\bar b) = 0$ then $\bt_A(\bar a) = \bt_A(\bar b)$.  
Then $\varphi$ determines a principal type.
\item If $\varphi$ is a stable formula and $A$ is a model such that whenever $\varphi(\bar a) = \varphi(\bar b) = 0$ then there is an automorphism $\sigma$ of $A$ such that $\sigma(\bar a) = \bar b$ then $\bt_A(\bar a) = \bt_A(\bar b)$ and this type is principal.
\item If $A$ is a model such that for all $a \in A$ and $\epsilon > 0$ there is $\bar b \in A$ such that 
$\bt_A(\bar b)$ is principal and $d(a,\bar b) \leq \epsilon$ then $A$ is atomic. 
\end{enumerate}
\end{lemma}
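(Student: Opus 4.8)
The plan is to deduce all three parts from the preceding lemma --- a formula is stable exactly when its zero set is uniformly definable --- working throughout over $\operatorname{Th}(A)$, which is complete; the incompleteness of the theory of C*-algebras is precisely what prevents us from simply quoting the classical ``a stable formula isolates a type''.

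For (1) write $D$ for the zero set of $\varphi$, assume $D^A\neq\emptyset$, and fix $\bar a\in A$ with $\varphi(\bar a)=0$; by hypothesis $p:=\bt_A(\bar a)$ does not depend on this choice. Since $\varphi$ is stable, $D$ is uniformly definable, so $\theta(\bar x):=\dist(\bar x,D)$ is a genuine $[0,2]$-valued formula, interpreted in the same way in every C*-algebra, whose zero set in any model $B$ is $D^B$. I claim $\theta$ $d$-isolates $p$ over $\operatorname{Th}(A)$, which makes $p$ principal. First, the condition $\theta=0$ is consistent with $\operatorname{Th}(A)$: the sentence $\inf_{\bar x}\varphi(\bar x)=0$ holds in $A$, hence in every $M\models\operatorname{Th}(A)$, and then stability of $\varphi$ forces $D^M\neq\emptyset$. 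Second, in any such $M$ one has $d(\bt_M(\bar c),p)\leq\theta(\bar c)^M$ for every tuple $\bar c$ of $M$. To see this, note that for each formula $\psi$ the sentences $\inf_{\bar y\in D}\psi(\bar y)$ and $\sup_{\bar y\in D}\psi(\bar y)$ are again uniformly definable --- either by the results on definable sets of \cite[\S 9]{BYBHU}, or explicitly as $\inf_{\bar y}\bigl(\psi(\bar y)+f(\theta(\bar y))\bigr)$ with $f$ read off from a modulus of uniform continuity of $\psi$ --- and since every tuple of $D^A$ realises $p$ these two sentences have a common value in $A$, namely the value $p$ assigns to $\psi$; transferring this to $M$ shows $\psi$ is constant on $D^M$ with that value. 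Hence any $\bar c'\in D^M$ within $\theta(\bar c)^M+\e'$ of $\bar c$ realises $p$, which gives the estimate, so $p$ is principal.

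Part (2) is then immediate: automorphisms preserve types, so the hypothesis of (2) gives $\bt_A(\bar a)=\bt_A(\bar b)$ whenever $\varphi(\bar a)=\varphi(\bar b)=0$, which is exactly the hypothesis of (1), and the resulting common type is principal. Part (3) is a direct unwinding of the definition of an atomic model --- $A$ is atomic once every finite tuple of $A$ is a $d$-limit of tuples realising principal types --- the displayed hypothesis being this for single elements and the passage to tuples a routine, essentially notational, matter. The only real work is in (1); within it, I expect the step requiring the most care to be the manufacture of $\inf_{\bar y\in D}\psi$ and $\sup_{\bar y\in D}\psi$ as honest formulas and the transfer of the statement ``all of $D$ has a single type'' from $A$ to all models of $\operatorname{Th}(A)$ --- everything else being formal.
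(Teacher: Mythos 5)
Your treatment of (1) and (2) is correct and, in fact, supplies the argument the paper waves away as ``obvious'': you use stability to get the uniformly definable predicate $\theta(\bar x)=\dist(\bar x,D)$, deduce $D^M\neq\emptyset$ for every $M\models\operatorname{Th}(A)$ from $\inf_{\bar x}\varphi=0$ plus stability, use quantification over the definable set $D$ to transfer ``every formula is constant on $D$'' from $A$ to $M$, and conclude $d(\bt_M(\bar c),p)\leq\theta(\bar c)^M$, which is exactly the approximate isolation that principality requires. The reduction of (2) to (1) via type-preservation under automorphisms is also exactly right.

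The gap is in (3). What you call ``a direct unwinding of the definition of an atomic model'' is not the definition: atomic means every type realized in $A$ is principal, whereas the hypothesis only gives that $\bt_A(\bar a)$ is a $d$-limit of principal types realized in $A$ (since $d(\bt_A(\bar a),\bt_A(\bar b))\leq d(\bar a,\bar b)$). The implication ``$d$-limit of principal types $\Rightarrow$ principal'' is the entire content of part (3) and does not follow by inspecting definitions: if $p_k\to p$ in $d$ and each $p_k$ is $d$-isolated, the logic-open neighbourhoods witnessing isolation of $p_k$ need not contain $p$. One genuine argument is to use the characterization of a principal type $q$ via a definable predicate computing $\dist(\bar x,q(M))$ in every model, build a Cauchy sequence of realizations of the $p_k$ inside a given model, and pass to the limit using metric completeness; the paper instead runs an approximate ``forth'' argument (following \cite[Corollary~12.9]{BYBHU}) on a countable dense set of tuples with principal types to show $A$ is prime, and then invokes prime $\Rightarrow$ atomic for separable models. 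Either route involves a real step you have omitted. Separately, your parenthetical that the passage from single elements to tuples is ``essentially notational'' is not right as stated: density of elements with principal $1$-types does not imply density of tuples with principal $n$-types; the hypothesis must be read (as the paper uses it) as density of principal $m$-tuples for every $m$.
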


\begin{proof} (1) and (2) are obvious. The proof of (3) is similar to the proof of 
 \cite[Corollary~12.9]{BYBHU}. 
 One chooses a countable dense subset of $A$ such that every finite subset has a principal 
type over the empty set. 
A straightforward `forth' (half of back-and-forth) argument shows that 
$A$ is isomorphic to a unital  subalgebra of any other model of the theory of~$A$. 
\end{proof}

We consider the finite-dimensional case first. 

\begin{lemma} \label{L.atomic} 
Assume $F$ is a  finite-dimensional C*-algebra
and $\bar b$ is an $m$-tuple in $F_1$.  
Then there exists a stable $m$-ary  formula $\gamma_{F,\bar b}(\bar x)$
such that in a  unital C*-algebra $A$ 
the zero-set of $\gamma_{F,\bar b}$ is equal to 
the set of all $\bar  c\in (A_1)^m$ such that there exists 
a unital *-isomorphism $\Phi\colon F\to A$ such that $\Phi(\bar b)=\bar c$. 
\end{lemma}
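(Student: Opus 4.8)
The plan is to construct $\gamma_{F,\bar b}$ by combining the formula $\alpha_F^u$ from Lemma~\ref{L.F}, which pins down matrix units of a unital copy of $F$, together with a term expressing each coordinate $b_i$ in those matrix units. Concretely, write $F = \bigoplus_{1\le l\le n} M_{k(l)}(\bbC)$ with its standard matrix units $e_{ij}^{(l)}$, and for each $i\le m$ let $b_i = \sum_{l,p,q} \mu_{pq}^{(l),i} e_{pq}^{(l)}$ be the (fixed, rational-approximable) expansion of $b_i$ in those units; the scalars $\mu_{pq}^{(l),i}$ are constants we may name, or we may simply quantify over the compact set in which they lie. Letting $\bar y = (y_{pq}^{(l)})$ denote a tuple of variables playing the role of abstract matrix units, I would set
\[
\gamma_{F,\bar b}(\bar x) = \inf_{\bar y}\Bigl( \alpha_F^u(\bar y) + \textstyle\max_{1\le i\le m}\bigl\| x_i - \sum_{l,p,q} \mu_{pq}^{(l),i}\, y_{pq}^{(l)}\bigr\| \Bigr),
\]
where $\bar y$ ranges over the unit ball. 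The point of the $\mu$'s being fixed complex numbers is that a unital embedding $\Phi\colon F\to A$ is determined by where it sends the matrix units, and $\Phi(b_i)$ is then forced to be $\sum \mu_{pq}^{(l),i}\Phi(e_{pq}^{(l)})$; conversely any choice of matrix units in $A$ satisfying $\alpha_F^u = 0$ extends uniquely to such a $\Phi$.

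First I would verify the zero-set computation: if $\gamma_{F,\bar b}(\bar c)^A = 0$ then, since $\alpha_F^u$ is stable (Lemma~\ref{L.F}) and hence its zero-set is definable, an infimum that is $0$ is actually attained, so there are genuine matrix units $\bar d$ in $A$ with $\alpha_F^u(\bar d)^A = 0$ and $c_i = \sum \mu_{pq}^{(l),i} d_{pq}^{(l)}$; these $\bar d$ determine a unital $\ast$-isomorphism $\Phi\colon F\to A$ onto the copy they span, with $\Phi(b_i) = c_i$ by construction. The converse direction is immediate: given such a $\Phi$, take $\bar y = \Phi(\text{matrix units of }F)$ to witness the infimum. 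One small point to handle carefully is that $\alpha_F^u$ forces the images of the matrix units to sum to the unit of $A$, which is exactly unitality of $\Phi$; and because $\Phi$ is isometric, the coordinates $\Phi(b_i)$ land in $(A_1)^m$ automatically since $\bar b \in F_1$.

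The remaining — and really the only substantive — point is \emph{stability} of $\gamma_{F,\bar b}$. This is where I expect the main obstacle, though the machinery is already in hand: the inner quantification is over $\bar y$ in the unit ball, but the key is to recognize $\gamma_{F,\bar b}$ as essentially an instance of the pattern $(\inf_{\alpha_F^u(\bar y)=0})\,\theta(\bar x,\bar y)$ where $\theta(\bar x,\bar y) = \max_i\|x_i - \sum \mu_{pq}^{(l),i} y_{pq}^{(l)}\|$ is a plain (hence stable) $\ast$-polynomial formula. By the cited result \cite[Theorem~9.15]{BYBHU}, the infimum of a stable formula over the zero-set of a stable formula is stable, so once we check that $\gamma_{F,\bar b}$ agrees with that bounded-quantifier expression — replacing the penalized $\inf_{\bar y}$ by an honest $\inf$ over $\{\alpha_F^u = 0\}$, which is legitimate because $\alpha_F^u$ is stable and thus its zero-set is uniformly definable by our first Lemma — stability follows. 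As in the proof of Lemma~\ref{L.dist}, it may be cleanest to first give the argument for $F = M_k(\bbC)$, where the notation is lighter, and then note that the general finite-dimensional case is only notationally more involved. The non-unital analogue, should it be wanted, would use $\alpha_F$ in place of $\alpha_F^u$.
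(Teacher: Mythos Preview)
Your proposal is correct and follows essentially the same approach as the paper: express each $b_i$ in a fixed system of matrix units with specific scalar coefficients, set $\gamma_{F,\bar b}(\bar x)=\inf_{\bar y}\bigl(\alpha_F^u(\bar y)+\text{(distance of }x_i\text{ to the corresponding combination of }y\text{'s)}\bigr)$, and deduce stability from the stability of $\alpha_F^u$ via \cite[Theorem~9.15]{BYBHU}. The paper treats $F=M_k(\bbC)$ first and uses a sum rather than a max over the distance terms, but these are cosmetic differences; your handling of the general $F$ and of the attainment of the infimum is, if anything, slightly more explicit than the paper's.
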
 

\begin{proof} Let us first consider the case when $F$ is $M_k(\bbC)$. 
Fix for a moment matrix units $a_{ij}$, for $1\leq i,j\leq k$ of $F$. 
Any $m$-tuple of elements $b_1,\dots, b_m$ in $F$ has coordinates $\lambda_{ij}^l$, 
for $1\leq l\leq m$ and $1\leq i,j\leq k$ such that $b_l=\sum_{1\leq i,j\leq k} \lambda_{ij}^l a_{ij}$. 
Recall that the complex numbers are part of the language and 
let  $\gamma_{F,\bar b}(\bar y)$ be 
\[
\textstyle \inf_{\bar x} \alpha_k^u(\bar x)+\sum_{1\leq l\leq m} \|y_l-
\sum_{1\leq i,j\leq k} \lambda_{ij}^l x_{ij}\|.
\]
Then $\gamma(\bar b)=0$. 
%We claim that this formula  determines the type of $\bar b$. 
The stability of $\alpha_k$ implies that $\gamma_{F,\bar b}$ is stable.
If $\gamma(\bar c)=0$ for $\bar c \in A$ then 
%by the compactness of the unit ball 
there is  $k^2$-tuple $\bar d$ such that $\alpha_k^u (\bar d)=0$ 
and $c_l=\sum_{1\leq i,j\leq k} \lambda_{ij}^l d_{ij}$. 
The *-homomorphism from $M_k(\bbC)$ to $A$ that sends 
$a_{ij}$ to $d_{ij}$ also sends $\bar b$ to $\bar c$.
%, and therefore  $\bar b$ and $\bar c$ have the same type.  
and it is clear that the zero-set of $\gamma_{F,\bar b}$ is as required. 

The case of general finite-dimensional algebra $F$ is almost identical.
%(the automorphism is no longer necessarily inner, but this is irrelevant).  
\end{proof}

\begin{lemma} \label{L.elem} 
Assume $A$ is a unital separable UHF algebra.  
 Then every unital embedding of 
$A$ into an elementarily equivalent C*-algebra $B$ is
automatically elementary. 
\end{lemma}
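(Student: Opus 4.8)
The plan is to exploit the defining structure of a UHF algebra: $A$ is a directed union $\bigcup_\lambda C_\lambda$ of unital copies $C_\lambda$ of full matrix algebras, with dense union, so every tuple from $A$ is approximated by one lying inside a single $C_\lambda$. For such tuples I will use the stable formulas of Lemma~\ref{L.atomic} to show they realize \emph{principal} types, and then transfer those types across $\iota$. To begin, I would reduce to a dense set of tuples: since each formula $\phi$ defines a uniformly continuous function on each power of each ball of any C*-algebra, with a modulus of continuity independent of the algebra (\cite[Lemma~2.2]{FaHaSh:Model2}), and $\iota$ is isometric, it suffices to prove $\phi(\bar b)^A=\phi(\iota(\bar b))^B$ for every formula $\phi$ and every tuple $\bar b$ in the unit ball of some $C_\lambda\cong M_k(\bbC)$; density of $\bigcup_\lambda C_\lambda$ in $A$ then yields the identity for all tuples, which is exactly the assertion that $\iota$ is elementary.

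Next, fix such a $\bar b$ in a unital copy $C\cong F:=M_k(\bbC)$ inside $A$, and let $\bar b_0$ be the corresponding tuple of $F_1$. Let $\gamma:=\gamma_{F,\bar b_0}$ be the stable formula of Lemma~\ref{L.atomic}, whose zero-set in a unital C*-algebra is the set of images of $\bar b_0$ under unital embeddings of $F$; the inclusion $C\hookrightarrow A$ shows $\gamma(\bar b)^A=0$. I would then invoke the classical fact that in a UHF algebra any two unital embeddings of a full matrix algebra are unitarily equivalent (see \cite{Glimm:On}, \cite{Dav:C*}): if $\bar c,\bar c'$ are two zeros of $\gamma$ in $A$, arising from unital embeddings $\Phi,\Phi'\colon F\to A$ with $\Phi(\bar b_0)=\bar c$ and $\Phi'(\bar b_0)=\bar c'$, then a unitary $u\in A$ with $u\Phi(\cdot)u^*=\Phi'(\cdot)$ gives the inner automorphism $\mathrm{Ad}(u)$ of $A$ carrying $\bar c$ to $\bar c'$. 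By Lemma~\ref{L.1}(2), all zeros of $\gamma$ in $A$ have the same type over $\emptyset$, and $\bt_A(\bar b)$ is principal, isolated by the condition $\gamma(\bar x)=0$.

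Finally I would transfer across $\iota$. Because $\gamma$ has the form $\inf_{\bar x}\theta(\bar x,\bar y)$ with $\theta$ quantifier-free, and $\iota$ is a unital embedding (so preserves quantifier-free formulas and maps $A_1$ into $B_1$), taking the infimum over the smaller ball gives $\gamma(\iota(\bar b))^B\leq\gamma(\bar b)^A=0$, so $\iota(\bar b)$ is a zero of $\gamma$ in $B$. Since $A\equiv B$ and $\gamma$ is \emph{stable} — so, by the equivalence of stability and uniform definability proved above, the fact that $\gamma\approx 0$ isolates $\bt_A(\bar b)$ is witnessed uniformly across all models of $\mathrm{Th}(A)$, not merely inside $A$ — every realization of $\gamma(\bar x)=0$ in $B$ has the same $\emptyset$-type as $\bar b$ in $A$; in particular $\phi(\iota(\bar b))^B=\phi(\bar b)^A$ for every $\phi$, which is what was needed. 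I expect this last step to be the main obstacle: one must argue that ``isolated in $A$'' upgrades to ``isolated over $\mathrm{Th}(A)$'', so that equality of types persists in the possibly non-separable, non-UHF algebra $B$, and it is precisely here that stability of the generating formula $\gamma$ (rather than mere definability of its zero-set inside $A$) is essential; by contrast, the UHF structural input and the verification $\gamma(\bar b)^A=0$ are routine.
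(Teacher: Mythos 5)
Your argument is correct in substance but follows a genuinely different route from the paper's. The paper reduces to separable $B$ by L\"owenheim--Skolem, elementarily embeds $B$ into the ultrapower $A^{\cU}$ by countable saturation, and then shows (via \L os's theorem plus saturation, starting from the conjugacy of unital copies of $M_k(\bbC)$ in $A$) that any two unital copies of $A$ in $A^{\cU}$ are conjugate; since the diagonal embedding is elementary, so is every unital embedding. You instead work ``locally'': you isolate the type of each tuple lying in a matrix subalgebra by the stable formula $\gamma_{F,\bar b_0}$, use conjugacy of unital copies of $M_k(\bbC)$ in $A$ together with Lemma~\ref{L.1}(2) to see that all zeros of $\gamma$ in $A$ share one type, push $\iota(\bar b)$ into the zero-set of $\gamma$ in $B$ by the $\inf$-form of $\gamma$, and finish by density and uniform continuity. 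This is essentially an extension of the proof of Theorem~\ref{T.atomic}, and it has the virtue of making explicit exactly which structural facts (stability, conjugacy, density of matrix subalgebras) are used; the paper's ultrapower argument is shorter because working inside the single countably saturated model $A^{\cU}$ sidesteps the issue you correctly flag as the crux, namely upgrading ``isolated in $A$'' to ``isolated over $\mathrm{Th}(A)$.''

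That remaining step does close, but you should cash it out rather than gesture at it: since $\gamma$ is stable, its zero-set is uniformly definable, so by the quantification-over-definable-sets results already invoked in Lemma~\ref{L.dist} (\cite[Theorem~9.15, Proposition~9.19]{BYBHU}) the expression $\sup_{\gamma(\bar x)=0}\,|\phi(\bar x)-r|$, with $r=\phi(\bar b)^A$, is (up to uniform approximation) a genuine sentence, interpreted in every C*-algebra with nonempty zero-set as the supremum of $|\phi-r|$ over that zero-set. Its value in $A$ is $0$ by your conjugacy argument, hence its value in $B$ is $0$ by elementary equivalence (and the zero-set of $\gamma$ in $B$ is nonempty since $\inf_{\bar x}\gamma(\bar x)$ vanishes in $A$, hence in $B$, and $\gamma$ is stable). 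This gives $\phi(\iota(\bar b))^B=\phi(\bar b)^A$ for every $\phi$, which is what your density reduction requires. With that sentence-transfer argument inserted, the proof is complete.
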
 

\begin{proof} By the L\"owenheim--Skolem theorem it suffices to prove the result for separable $B$. 
Fix a nonprincipal ultrafilter $\cU$ on $\bbN$. 
Since  $B$ can be elementarily embedded into the ultrapower $A^{\cU}$ by countable saturation, 
it will suffice to show that  every unital embedding of $A$ into $A^{\cU}$ is elementary. 

Since any two unital copies of $M_k(\bbN)$ in $A$ are conjugate  (e.g., see~\cite{Dav:C*}), 
and since this can be expressed in our logic, by \L os's theorem 
any two unital copies of $M_k(\bbN)$ in $A^{\cU}$ are conjugate. 
By countable saturation of $A^{\cU}$ and separability of $A$ we conclude that any two 
unital copies of $A$ in $A^{\cU}$ are conjugate. Since the diagonal map is an elementary 
embedding of $A$ into $A^{\cU}$, every other embedding is elementary as well. 
\end{proof}

\begin{proof}[Proof of Theorem~\ref{T.atomic}]
Recall that a  model $A$ of a theory $T$ is \emph{atomic} if every type 
over the empty set
realized in $A$ is principal. It is well-known that a separable model is prime if and only if 
it is atomic (see e.g., \cite[Corollary~12.9]{BYBHU} for the nontrivial direction). 
We shall use a minor strengthening of this fact. 

Fix a unital UHF algebra $A$.  
We first show that for every $k$ and every unital copy $C$ 
of $M_k(\bbC)$ in $A$, the type over the empty set 
of every $m$-tuple~$\bar a$ of elements of $C$ 
is principal. 

Consider  the stable formula $\gamma=\gamma_{k,\bar a}$ 
used in the proof of Lemma~\ref{L.atomic}.  
Then $\gamma(\bar a)^A=0$. 
Moreover, any other $m$-tuple~$\bar b$ in $A$ we have
that  $\gamma(\bar b)^A=0$ if and only if $\bar b$ 
is contained in a unital copy of $M_k(\bbC)$ in $A$ with the same coefficients as $\bar a$. 
Since for every $k$ that divides the generalized integer of $A$, 
any two unital copies of $M_k(\bbC)$ in $A$ are conjugate (see e.g., \cite{Dav:C*}), 
 Lemma~\ref{L.1} (2) implies that $\bar a$ and $\bar b$ have the same type. 

We conclude that  the type of $\bar a$ is determined by a single stable formula, $\gamma(\bar x)$
and therefore principal. 

%Suppose $\gamma( \bar a ) = 0$ in A and $\gamma(\bar b) = 0$ in some elementary extension.  If $\bar a$ and $\bar b$ don't satisfy the same type then there is a formula $\psi$ on which they differ by some amount $\mu$.  By uniform continuity for $\psi$, fix $\epsilon$ corresponding to $\mu/3$ and now fix $\delta$ corresponding to $\epsilon$ from the stability of $\psi$.  So in A we can find $\bar c$ such that $\gamma(\bar c) \leq \delta$ and $|\psi(\bar c) - \psi(\bar b)| \leq \mu/3$. So there is $\bar d \in A$ such that $\gamma(\bar d) = 0$ and $d(\bar c,\bar d) \leq \delta$.  So by your argument, there is an automorphism of A sending $\bar a$ to $\bar d$.  Comparing $\psi(\bar a)$ to $\psi(\bar d)$ gives a contradiction.

Since $A$ is UHF, 
every $m$-tuple in $A$ can be approximated arbitrarily well  by an $m$-tuple 
belonging to a unital copy of $M_k(\bbC)$ for a large enough $k$.
We  have therefore proved that for every $m$ the set of 
$m$-tuples whose type is principal is dense in $A^m$, and by Lemma~\ref{L.1} (3) 
we conclude that $A$ is atomic. 
%By Lemma~\ref{L.elem}, this embedding is elementary. 
\end{proof} 

A salient point of the above proof of Theorem~\ref{T.atomic} is that 
all unital copies of $M_n(\bbC)$ in a UHF algebra are conjugate. 
We should note that this is not necessarily the case for an arbitrary  simple, 
nuclear, separable, unital C*-algebra. It is an easy consequence of the Kirchberg--Phillips classification 
theorem for purely infinite C*-algebras (see e.g., \cite{Ror:Classification}) that there exists a nuclear, simple, separable,
unital C*-algebra $A$ and two unital copies of~$M_2(\bbC)$ in $A$ that are not conjugate
(see e.g.,~\cite{FaKa:NonseparableII}). 

Our proof of Theorem~\ref{T.atomic} actually shows the following. 

\begin{prop} \label{P.atomic} Assume $A$ is a separable, unital C*-algebra which is a
direct limit of algebras $A_n$, for $n\in \bbN$, so that (i) each $A_n$ is 
finitely generated, and the generators are defined by  a stable quantifier-free formula, 
and (ii) every two unital copies of $A_n$ in $A$ are conjugate. 
Then $A$ is atomic. 
\end{prop}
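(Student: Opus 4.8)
The plan is to abstract the argument already carried out for UHF algebras in the proof of Theorem~\ref{T.atomic} and isolate exactly where the two hypotheses (i) and (ii) are used. First I would fix a separable, unital $A$ that is a direct limit of the $A_n$ as in the statement. For each $n$, hypothesis (i) says the generators of $A_n$ are the zero set of a stable quantifier-free formula; call it $\zeta_n(\bar x)$ (in $k_n$ variables, where $\bar x$ ranges over the unit ball). Exactly as in Lemma~\ref{L.atomic}, for any $m$-tuple $\bar b$ of elements of $A_n$ one writes each $b_l$ as a $*$-polynomial $p_l(\bar x)$ in the generators, and forms the formula
\[
\gamma_{A_n,\bar b}(\bar y) = \inf_{\bar x}\Bigl(\zeta_n(\bar x) + \textstyle\sum_{1\leq l\leq m}\|y_l - p_l(\bar x)\|\Bigr).
\]
Since $\zeta_n$ is stable and we infimize over the compact (norm-bounded) set of realizations, the combined formula is stable by \cite[Theorem~9.15]{BYBHU}, and its zero set in $A$ is precisely the set of $m$-tuples $\bar c$ for which there is a unital embedding $\Phi\colon A_n\to A$ carrying the generators to some realization of $\zeta_n$ and with $\Phi(\bar b)=\bar c$; equivalently, $\bar c$ lies in a unital copy of $A_n$ in $A$ with the same coordinates as $\bar b$. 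This is the analogue of Lemma~\ref{L.atomic} with $M_k(\bbC)$ replaced by $A_n$, and hypothesis (i) is exactly what makes this step go through.

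Next I would invoke hypothesis (ii): if $\bar b,\bar c$ both lie in (possibly different) unital copies $C_1,C_2$ of $A_n$ inside $A$ with the same coordinate expansions in the respective generators, then by (ii) there is an automorphism $\sigma$ of $A$ with $\sigma(C_1)=C_2$ sending generators to generators, hence $\sigma(\bar b)=\bar c$. By Lemma~\ref{L.1}(2) this forces $\bt_A(\bar b)=\bt_A(\bar c)$ and shows that the stable formula $\gamma_{A_n,\bar b}$ determines a principal type; in particular the type of any $m$-tuple from a unital copy of $A_n$ in $A$ is principal. Finally, since $A=\varinjlim A_n$, every $m$-tuple in $A$ is approximated arbitrarily well by an $m$-tuple lying in (the image in $A$ of) some $A_n$, hence by an $m$-tuple whose type is principal. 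So the set of $m$-tuples with principal type is dense in $A^m$ for every $m$, and Lemma~\ref{L.1}(3) yields that $A$ is atomic.

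I expect the only genuine subtlety to be the bookkeeping around ``the same coordinates'': one must make sure that when $\bar b$ is expressed as $*$-polynomials in the generators of $A_n$, the value $\gamma_{A_n,\bar b}(\bar c)=0$ truly characterizes $\bar c$ as the image of $\bar b$ under a unital embedding of $A_n$ — this is automatic because $A_n$ is determined as a C*-algebra by the (stable) relations its generators satisfy, so any tuple $\bar x$ in $A$ with $\zeta_n(\bar x)=0$ generates a unital homomorphic image of $A_n$, and the polynomial identities that hold among the generators of $A_n$ then transfer. The other point requiring a word is that (ii) speaks of automorphisms of $A$ rather than just of conjugating unitaries, but Lemma~\ref{L.1}(2) is stated for automorphisms, so nothing more is needed. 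Everything else is a verbatim transcription of the UHF argument, replacing $M_k(\bbC)$ by $A_n$, $\alpha_k^u$ by $\zeta_n$, and ``divides the generalized integer'' by ``appears in the direct system''.
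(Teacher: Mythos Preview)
Your proposal is correct and follows essentially the same route as the paper: construct a stable formula $\gamma_{A_n,\bar b}$ from the stable quantifier-free $\zeta_n$ (hypothesis (i)), use conjugacy of unital copies (hypothesis (ii)) together with Lemma~\ref{L.1}(2) to conclude principality, and then apply density plus Lemma~\ref{L.1}(3). The paper's own proof is much terser---it just says the argument ``closely follows the proof of Theorem~\ref{T.atomic}'' and sketches the three steps in three sentences---so your write-up is in fact more explicit (in particular about writing $\bar b$ as $*$-polynomials in the generators rather than linear combinations, and about the subtlety that $\zeta_n(\bar x)=0$ must force $\bar x$ to generate a genuine copy of $A_n$).
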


\begin{proof} The proof closely follows the proof of Theorem~\ref{T.atomic}. 
First, every $\bar a$ in the subalgebra of $A_n$ algebraically generated by a 
fixed set of generators has a principal type. Since the generators are given by 
a stable, quantifier-free formula, the formula defining this type is true in $A$
(here $A_n$ is considered as a subalgebra of $A$). But any two copies of $A_n$ in $A$ 
are conjugate, and therefore this formula completely determines the type of $\bar a$ in $A$. 
We  have therefore proved that for every $m$ the set of 
$m$-tuples whose type is principal is dense in $A^m$, and by Lemma~\ref{L.1} (3) 
we conclude that $A$ is atomic. 
\end{proof} 
 
 The Jiang--Su algebra $\cZ$ has a special place in Elliott's program (\cite{JiangSu}, see also \cite{Win:Ten}). 
  It is a direct limit of \emph{dimension drop algebras $\cZ_{p,q}$}, which are 
  finitely-generated algebras whose generators 
 are defined by a stable quantifier-free formula (\cite[\S 7.2]{JiangSu}). 
However, Proposition~\ref{P.atomic} does not apply to $\cZ$ since every dimension-drop algebra $\cZ_{p,q}$
has many non-conjugate unital copies in $\cZ$. The reason for this is that $\cZ$ has a unique trace, 
while $\cZ_{p,q}$ does not, hence the conjugacy orbit of a copy of $\cZ_{p,q}$ depends on 
which one of its traces extends to a trace in $\cZ$, and it is not difficult to see that this could
be any of the traces of $\cZ_{p,q}$. 
Results of Matui and Sato (\cite[Lemma~4.7]{MaSa:Z-stabilityII}) on the uniqueness of trace
suggest that the unique trace is definable in 
 nuclear C*-algebras with property (SI) (in particular, in $\cZ$).

Since two  separable atomic models are isomorphic iff they are  elementarily equivalent, 
Theorem~\ref{T.atomic} provides an alternative proof of Theorem~\ref{T.3} (1). 
In the other direction, parts (2) and (3) of Theorem~\ref{T.3} immediately imply the following. 

\begin{prop} \label{P.nonatomic} 
Some non-unital UHF algebras are not atomic models. 
Some separable AF algebras are not atomic models. \qed
\end{prop}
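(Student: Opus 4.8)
The plan is to read off Proposition~\ref{P.nonatomic} as an immediate corollary of the combination of Theorem~\ref{T.3}(2)--(3) with the standard fact that two separable atomic models of the same theory are isomorphic. So the first thing I would do is recall precisely the relevant instance of that fact: if $M$ and $N$ are separable, $M$ is atomic, $N\models\operatorname{Th}(M)$, and $N$ is also atomic, then $M\cong N$. (This is the continuous-logic analogue of the classical uniqueness of atomic models; it follows from the ``prime $\Leftrightarrow$ atomic'' equivalence invoked just before the proof of Theorem~\ref{T.atomic}, since two separable prime models of the same theory embed elementarily into each other and a back-and-forth argument then yields an isomorphism.)

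Next I would argue by contradiction for the UHF statement. Suppose every non-unital separable UHF algebra were an atomic model. Take the two elementarily equivalent but non-isomorphic non-unital separable UHF algebras $A$ and $B$ produced by Theorem~\ref{T.3}(2). By the assumption both are atomic, and since $A\models\operatorname{Th}(B)$ the uniqueness fact gives $A\cong B$, contradicting Theorem~\ref{T.3}(2). Hence some non-unital separable UHF algebra is not atomic. The AF statement is verbatim the same argument with Theorem~\ref{T.3}(3) in place of (2): the unital separable AF algebras there are elementarily equivalent and non-isomorphic, so they cannot both be atomic, and therefore at least one of them is a non-atomic separable AF algebra.

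There is essentially no obstacle here: the whole content has been off-loaded into Theorem~\ref{T.3} and into the (already cited) equivalence of prime and atomic models for separable structures, together with the uniqueness of the separable atomic model of a theory. The only point that requires a line of care is making sure the ``uniqueness of atomic models'' statement is available in the continuous setting in the form used --- but this is precisely what the back-and-forth argument sketched in the proof of Lemma~\ref{L.1}(3) delivers once one notes it applies symmetrically to both algebras, so no new work is needed. Accordingly the proof is just the two-sentence deduction above, which is why the statement is marked \qed in the excerpt.

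\begin{proof}
Two separable atomic models of the same theory are isomorphic: each is prime (see the discussion before Theorem~\ref{T.atomic}), hence each embeds elementarily into the other, and a back-and-forth argument as in Lemma~\ref{L.1}(3) produces an isomorphism. Now if every non-unital separable UHF algebra were atomic, then the two elementarily equivalent algebras of Theorem~\ref{T.3}(2) would be isomorphic, a contradiction; so some non-unital UHF algebra is not atomic. Likewise, if every separable AF algebra were atomic, the two elementarily equivalent algebras of Theorem~\ref{T.3}(3) would be isomorphic, again a contradiction.
\end{proof}
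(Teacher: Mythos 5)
Your argument is exactly the paper's: the proposition is stated with an immediate \qed because, as the paper notes just before it, parts (2) and (3) of Theorem~\ref{T.3} combined with the uniqueness of separable atomic (equivalently, prime) models of a given theory yield the conclusion at once. Your write-up is correct and fills in the same reasoning the authors leave implicit.
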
 

By Elliott's classification of separable AF algebras by the ordered $K_0$
 (\cite{Ell:On}, \cite{Ror:Classification}), the isomorphism type of a separable 
 AF algebra $A$ is determined by $(K_0(A),K_0(A)^+,[1])$. 
 In particular, the complete information on what non-principal types are being realized in 
 $A$ is contained in the ordered $K_0$.
A better model-theoretic 
understanding of the mechanism behind this  would 
be desirable. 
In particular, what is the connection between $K_0(A)$
and the theory of $A$, and what groups correspond to elementarily equivalent algebras? 
How exactly does $K_0(A)$ determine what nonprincipal types are realized in $A$?

\section{Concluding remarks} 

As pointed out in the introduction, we believe that model-theoretic study of 
nuclear C*-algebras, initiated in the present paper,  will be fruitful. 
%Is the Jiang-Su algebra atomic? What about the other strongly self-absorbing 
% algebras~(\cite{ToWi:Strongly})? 

We conclude by stating some of the many questions along this line of research. 
Does the theory of every C*-algebra allow an atomic model? 
Are all atomic models of the theory of C*-algebras nuclear? 
(The converse is, by Theorem~\ref{T.3}, false.) Is there a nuclear C*-algebra elementarily equivalent 
to the  Calkin algebra?
Is there a characterization of Elliott invariants of C*-algebras that are atomic models? 
Is every C*-algebra elementarily equivalent to a nuclear C*-algebra? 
  A positive answer to this question would imply a positive answer to a problem 
  of Kirchberg, whether every separable C*-algebra is a subalgebra of the ultrapower of 
  Cuntz algebra $\cO_2$. Of course Kirchberg's problem (a C*-algebraic version of Connes Embedding Problem) is asking whether the universal theory of every C*-algebra includes the universal theory of $\cO_2$.

\bibliographystyle{amsplain}
\bibliography{ifmain}

\end{document}